\newtheorem*{corollary*}{Corollary}
\newtheorem*{theorem*}{Theorem}
\newtheorem{theorem}{Theorem}[section]
\newtheorem{corollary}[theorem]{Corollary}
\newtheorem{lemma}[theorem]{Lemma}
\newtheorem{proposition}[theorem]{Proposition}
\newtheorem{question}[theorem]{Question}
\newtheorem*{claim*}{Claim}
\newtheorem*{conjecture 1}{Conjecture 1}
\newtheorem*{conjecture 2}{Conjecture 2}
\theoremstyle{definition}
\newtheorem{definition}[theorem]{Definition}
\newtheorem*{theorem }{Theorem}
\newtheorem{remark}[theorem]{Remark}
\newtheorem{example}[theorem]{Example}
\theoremstyle{remark}
\numberwithin{equation}{theorem}
\renewcommand*\env@matrix[1][\
arraystretch]{%
  \edef\arraystretch{#1}%
  \hskip -\arraycolsep
  \let\@ifnextchar\new@ifnextchar
  \array{*\c@MaxMatrixCols c}}
\newcommand{\End}{\operatorname{End}}
\newcommand{\Hom}{\operatorname{Hom}}
\newcommand{\soc}{\operatorname{\mathrm{soc}}}
\newcommand{\cB}{{\mathcal B}}
\newcommand{\cS}{{\mathcal S}}
\newcommand{\pdim}{\operatorname{pdim}}
\newcommand{\gldim}{\operatorname{gldim}}
\begin{document}

\title{A new characterisation of quasi-hereditary Nakayama algebras and applications}
\date{\today}

\subjclass[2020]{Primary 16G10, 16E10; 11B39}

\keywords{Nakayama algebras, quasi-hereditary algebras, global dimension, Fibonacci numbers}

\author{Ren\'{e} Marczinzik}
\email{marczire@mathematik.uni-stuttgart.de}

\author{Emre Sen}
\email{emresen641@gmail.com}

\begin{abstract}
We call a finite dimensional algebra $A$ \emph{S-connected} if the projective dimensions of the simple $A$-modules form an interval. We prove that a Nakayama algebra $A$ is S-connected if and only if $A$ is quasi-hereditary.
We apply this result to improve an inequality for the global dimension of quasi-hereditary Nakayama algebras due to Brown. We furthermore classify the Nakayama algebras where equality is attained in Brown's inequality and show that they are enumerated by the even indexed Fibonacci numbers if the algebra is cyclic and by the odd indexed Fibonacci numbers if the algebra is linear.
\end{abstract}

\maketitle
\section*{Introduction}
We assume that all algebras are finite dimensional, non-semisimple and connected over an algebraically closed field $K$. We call an algebra of finite global dimension \emph{S-connected} if the projective dimensions of the simple modules form an interval, that is we have that for every natural number $k$ with $\pdim S_1 \leq k \leq \pdim S_2$ for two simple $A$-modules $S_1$ and $S_2$ we also have $k=\pdim S_3$ for some simple $A$-module $S_3$. A \emph{Nakayama algebra} is an algebra such that every indecomposable module has a unique composition series. Nakayama algebras are one of the most fundamental classes of finite dimensional algebras with several recent connections to other areas such as cluster tilting theory \cite{JK} and combinatorics \cite{MRS}. We then prove:
\begin{theorem*}
A Nakayama algebra $A$ is S-connected if and only if $A$ is quasi-hereditary.
\end{theorem*}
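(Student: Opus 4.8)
The plan is to turn quasi-heredity of a Nakayama algebra into a combinatorial property of its Kupisch series and to match that against the combinatorial content of S-connectedness. I would describe the idempotent ideals of a connected Nakayama algebra $A$ with Kupisch series $(c_1,\dots,c_n)$: if $d_{ji}$ denotes the length of the shortest path from $j$ to $i$ in the quiver, then $Ae_iA\cap P_j=\rad^{d_{ji}}P_j$ when $d_{ji}<c_j$ and is $0$ otherwise, so as a left module $Ae_iA=\bigoplus_{j}\rad^{d_{ji}}P_j$. Since each nonzero summand $\rad^{d_{ji}}P_j$ is uniserial with top $S_i$, the ideal $Ae_iA$ is a heredity ideal exactly when $S_i$ occurs only once in $P_i$ and every such summand is isomorphic to $P_i$, i.e.\ $c_j=c_i+d_{ji}$ for all $j$ with $d_{ji}<c_j$; call such a vertex $i$ \emph{good}. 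For a linear Nakayama algebra the sink is always good, because the inequality $c_j\le n-j+1$ forces any submodule of $P_j$ reaching the sink to be simple, and $A/Ae_nA$ is again linear Nakayama on $n-1$ vertices; hence every linear Nakayama algebra is quasi-hereditary. For a cyclic Nakayama algebra, deleting one vertex makes the cycle a line, so $A/Ae_iA$ is a \emph{linear} Nakayama algebra whenever $i$ is good; since the partial order underlying any quasi-hereditary structure may be linearly refined, one may assume a heredity chain has all steps generated by primitive idempotents, and therefore a cyclic Nakayama algebra is quasi-hereditary if and only if it has a good vertex.

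For the implication \emph{quasi-hereditary $\Rightarrow$ S-connected} I would induct on the number of simples, peeling off a primitive heredity ideal $Ae_iA$ (the sink if $A$ is linear, a good vertex if $A$ is cyclic); then $B:=A/Ae_iA$ is a Nakayama algebra (linear and connected in the cyclic case), which is quasi-hereditary and hence S-connected by induction. The crux is comparing projective dimensions over $A$ and over $B$: every indecomposable projective $B$-module is, over $A$, the quotient of an indecomposable projective $A$-module by a submodule isomorphic to $P_i$, which is $A$-projective, so it has projective dimension at most $1$ over $A$. Feeding a minimal projective $B$-resolution of a simple $S_j$ with $j\ne i$ into this shows $\pdim_A S_j\in\{\pdim_B S_j,\pdim_B S_j+1\}$, the value $+1$ occurring precisely when the last projective of that resolution is non-projective over $A$; together with $\Omega_A S_i=\rad P_i$, which is a $B$-module whose projective dimension is visible from its uniserial $B$-structure, this lets one carry the ``no gaps'' property from $B$ up to $A$ (and the value $0$ is never attained for cyclic $A$, since some $c_{i+1}=c_i-1$).

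For \emph{S-connected $\Rightarrow$ quasi-hereditary} the linear case is already done, so let $A$ be cyclic and S-connected, with the projective dimensions of the simples filling an interval $[1,D]$, $D=\gldim A$. I would take a vertex $i$ with $\pdim S_i=D$ and show it is good. Walking backwards from $i$, goodness of $i$ means that $(c_i,c_{i-1},c_{i-2},\dots)$ increases by exactly $1$ for a while — equivalently $P_{i-k}$ and $P_i$ have the same socle on that range, so $\rad P_{i-k}\cong P_{i-k+1}$ and $\pdim S_{i-k}=1$ there — and then falls below the step count, after which the Kupisch inequalities make the remaining conditions automatic. Maximality of $\pdim S_i$ together with gap-freeness of $(\pdim S_j)_j$ rules out every other local shape of the Kupisch series near $i$: if the increase breaks too early one finds near $i$ a simple whose syzygy is a proper non-projective quotient, forcing its projective dimension above $D$; if the sequence returns too soon to a small value, some projective dimension strictly below $D$ gets skipped.

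The step I expect to be the real obstacle is the cyclic half of the last paragraph: converting the global homological hypothesis that the projective dimensions of the simples form an interval into the local arithmetic assertion that a vertex of maximal projective dimension is good. Projective dimensions of simples over a cyclic Nakayama algebra are governed by the iterated syzygy pattern encoded in Ringel's resolution quiver, and one must show that this pattern, once forbidden to have gaps, forces the Kupisch series into the prescribed ``strictly increasing, then collapsing'' shape around the extremal vertex. A secondary technical point is the bookkeeping in the second paragraph — pinning down exactly when the correction term across a heredity quotient equals $1$.
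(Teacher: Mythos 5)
Your proposal is a genuinely different strategy from the paper's, but as written it has two real gaps, one in each direction, and in both cases the gap sits exactly where the content of the theorem lies. For the direction \emph{S-connected $\Rightarrow$ quasi-hereditary} you try to exhibit a heredity ideal $Ae_iA$ directly by showing that a vertex of maximal projective dimension is ``good'', and you yourself flag this as the unproven obstacle. The paper avoids this entirely by invoking the Uematsu--Yamagata criterion (Theorem \ref{UYtheorem}): a Nakayama algebra is quasi-hereditary iff some simple has projective dimension $0$ or $2$. With that, the cyclic case is three lines: non-selfinjective cyclic forces a simple of projective dimension $1$ (from the Kupisch step $c_r=c_{r+1}+1$), $\gldim A\geq 2$, so S-connectedness hands you a simple of projective dimension $2$. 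Without rederiving something equivalent to Uematsu--Yamagata, your local analysis of the Kupisch series around an extremal vertex is a substantial open step, not a technicality. (A smaller imprecision: the trace formula $Ae_iA\cap P_j=\rad^{d_{ji}}P_j$ needs the extra hypothesis that this submodule is an epimorphic image of $P_i$; otherwise the trace is a proper submodule of it.)

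For \emph{quasi-hereditary $\Rightarrow$ S-connected}, your inductive scheme correctly yields $\pdim_B S_j\leq \pdim_A S_j\leq \pdim_B S_j+1$ across a heredity quotient $B=A/Ae_iA$, but the conclusion ``this lets one carry the no-gaps property from $B$ up to $A$'' does not follow from these inequalities alone: adding $1$ to the projective dimensions of an arbitrary subset of the simples of an S-connected algebra can create a gap (e.g.\ if every simple of $B$-projective dimension $c$ picks up the $+1$ while none of those of dimension $c+1$ does, and similarly further up), and one must additionally place $\pdim_A S_i$ inside the resulting set. Determining exactly which simples acquire the correction and proving no gap forms is the actual work, and it is not supplied. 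The paper's argument is structurally different and sidesteps this: it proves the interval property for $\mathcal{O}_A$ in one shot by showing (i) every odd value in $[1,\gldim A]$ is attained by a simple, via Madsen's result that an indecomposable module of odd projective dimension shares that dimension with one of its composition factors (Proposition \ref{madsenresult}), and (ii) all even values between two attained even projective dimensions are attained, via the syzygy-filtered algebra $\bm{\varepsilon}(A)$, with quasi-heredity supplying the even value $2$ through Uematsu--Yamagata. If you want to salvage your induction, the missing lemma you need is a precise description of the set of simples whose projective dimension increases under a heredity quotient of a Nakayama algebra, together with an argument that this set is ``upward closed'' in projective dimension; that is not obviously easier than the paper's route.
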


In \cite{Bro} Brown proved that for a quasi-hereditary Nakayama algebra with $n$ simple modules there is the inequality $\gldim A \leq \lambda(A)+1 \leq n$, where $\lambda(A)$ is the number of simple $A$-modules with projective dimension not equal to 1. For linear Nakayama algebras Brown also proved the even stronger inequality $\gldim A \leq \lambda(A)$. Brown's inequality was also proven by Koenig in \cite{Koe} using exact Borel subalgebras for quasi-hereditary algebras. In this note we will see that this inequality is a special case of an inequality that holds for any finite dimensional algebra that is S-connected. For a finite dimensional algebra $A$, set $\mathcal{O}_A:= \{ pd(S) \mid S$ simple $\}$. For a value $c \in \mathcal{O}_A$ we define $\lambda_c(A):= |\{ i \mid pd(S_i) \neq c \}|$, the number of simple $A$-modules with projective dimension not equal to $c$. The generalization of Brown's result looks as follows:
\begin{theorem*} \label{inequ}
Let $A$ be an S-connected algebra with $\min(\mathcal{O}_A)=a$. Then $\gldim(A) \leq a+ \lambda_c(A)$ for any $c \in \mathcal{O}_A$. 
\end{theorem*}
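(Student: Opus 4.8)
The plan is to reduce the statement to an elementary counting argument about the set $\mathcal{O}_A$. The starting point is the standard fact that for a finite dimensional algebra of finite global dimension one has $\gldim(A) = \max\{\pdim(S)\mid S \text{ simple}\} = \max(\mathcal{O}_A)$; since $A$ is S-connected it has finite global dimension by definition, so this maximum exists. Write $g := \gldim(A)$.

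Next I would unwind the S-connectedness hypothesis. By definition $\mathcal{O}_A$ is an interval of natural numbers, and we have just identified its minimum as $a$ and (by the previous paragraph) its maximum as $g$. Therefore $\mathcal{O}_A = \{a, a+1, \dots, g\}$, and in particular $|\mathcal{O}_A| = g - a + 1$. Now fix $c \in \mathcal{O}_A$. For every value $v \in \mathcal{O}_A \setminus \{c\}$ there is, by the very definition of $\mathcal{O}_A$, at least one simple module with projective dimension $v$; choosing one such simple module for each $v$ produces $|\mathcal{O}_A \setminus \{c\}|$ pairwise non-isomorphic simple modules, each having projective dimension different from $c$. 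Hence
\[
\lambda_c(A) \;\geq\; |\mathcal{O}_A \setminus \{c\}| \;=\; |\mathcal{O}_A| - 1 \;=\; g - a ,
\]
and rearranging yields $\gldim(A) = g \leq a + \lambda_c(A)$, which is the claim.

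There is essentially no obstacle here: the only inputs are the equality $\gldim(A) = \max(\mathcal{O}_A)$ and the definition of S-connectedness, after which the bound is forced. The conceptual point worth stressing in the write-up is that S-connectedness is precisely what makes the set of occurring projective dimensions "gap-free", so that $\lambda_c(A)$ is bounded below by the length of the interval $\mathcal{O}_A$ minus one; for a general algebra $\mathcal{O}_A$ could omit intermediate values and the inequality would fail. I would also remark that specialising to $c=1$ and using the fact (to be invoked from the Nakayama setting) that quasi-hereditary Nakayama algebras are S-connected recovers Brown's inequality $\gldim A \le \lambda(A)+1$, with $a=1$, since a non-semisimple algebra always has a simple module of projective dimension at least $1$, so $\min(\mathcal{O}_A)=1$ whenever $1\in\mathcal{O}_A$.
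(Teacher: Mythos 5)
Your argument is correct and is essentially identical to the paper's own proof: both identify $\mathcal{O}_A$ with the interval $[a,g]$ via S-connectedness and observe that the $g-a$ values in $\mathcal{O}_A\setminus\{c\}$ each force a distinct simple module of projective dimension $\neq c$, so $\lambda_c(A)\geq g-a$. No further changes are needed.
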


For quasi-hereditary Nakayama algebras we have $\min(\mathcal{O}_A)=0$ when $A$ has an acyclic quiver and $\min(\mathcal{O}_A)=1$ when $A$ has a non-acyclic quiver and thus we obtain a generalization of Brown's result with any $c \in \mathcal{O}_A$, where choosing $c=1$ gives Brown's original result with $\lambda_c(A)=\lambda(A)$.

Our last main result gives a combinatorial classification of the quasi-hereditary Nakayama algebras where equality is attained in Brown's inequality $\gldim A \leq \lambda(A)+1$ when $A$ has a cyclic quiver and the inequality  $\gldim A \leq \lambda(A)$ when $A$ has a linear quiver. 
\begin{theorem*}
Let $n \geq 1$.
\begin{enumerate}
\item The number of connected cyclic quasi-hereditary Nakayama algebras with $n+1$ simple modules and $\gldim A = \lambda(A)+1$ is given by the even Fibonacci numbers $F_{2n}$.
\item The number of connected linear quasi-hereditary  Nakayama algebras with $n+1$ simple modules and $\gldim A = \lambda(A)$ is given by the odd Fibonacci numbers $F_{2n-1}$.
\end{enumerate}
\end{theorem*}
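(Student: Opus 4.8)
\emph{Overview.} The plan is to convert the statement into a count of Kupisch series and then to extract a Fibonacci recursion, using the first Theorem and the generalised Brown inequality to set things up.

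\emph{Reducing to a condition on projective dimensions.} Let $A$ be a quasi-hereditary Nakayama algebra with $n+1$ simple modules and put $d := \gldim A$. By the first Theorem $A$ is S-connected, so $\mathcal{O}_A = \{a, a+1, \dots, d\}$ is an interval, and by the discussion following the generalised Brown inequality $a = 1$ when the quiver of $A$ is cyclic and $a = 0$ when it is linear, in which latter case the unique simple projective module is the only simple of projective dimension $0$. Writing $m_j := |\{i : \pdim S_i = j\}|$ we have $\sum_j m_j = n+1$ and $m_j \geq 1$ for $a \leq j \leq d$, so $\sum_{j \geq 2} m_j \geq d-1$. A short count then shows that $\gldim A = \lambda(A)+1$ in the cyclic case, resp.\ $\gldim A = \lambda(A)$ in the linear case, holds if and only if $m_j = 1$ for every $j$ with $2 \leq j \leq d$; I will call such an $A$ \emph{extremal}. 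Each extremal algebra then carries a canonical \emph{apex}: the unique simple module of projective dimension $d$.

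\emph{The combinatorial shape of extremal algebras.} Next I would encode $A$ by its Kupisch series $(c_0, \dots, c_n)$ and invoke the standard recursive computation of minimal projective resolutions of simple Nakayama modules -- equivalently Ringel's resolution quiver, or a reduction in the spirit of the $\varepsilon$-construction for Nakayama algebras -- which expresses $\pdim S_i$ by iterating a successor map read off from $(c_i)$. Extremality forces the fibres of that map over $\{2, \dots, d\}$ to be singletons, and I expect this to pin the admissible Kupisch series down to a rigid recursive shape: reading the series starting at the apex, an extremal $A$ with $n+1$ simples should be obtained from an extremal algebra with $n$ simple modules by a one-point extension, and conversely every extremal algebra with $n$ simples should extend to an extremal algebra with $n+1$ simples in a bounded, explicitly describable number of ways. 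Carrying this out -- checking that each prescribed extension stays quasi-hereditary, raises the top projective dimension by at most one, and adds exactly one new simple at each higher projective dimension while no other extension does -- is the combinatorial core, and the place where the real work lies; everything is governed by the admissibility inequalities $c_{i+1} \geq c_i - 1$ together with the resolution recursion.

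\emph{The recursion and its solution.} Let $a_n$ (resp.\ $b_n$) be the number of isomorphism classes of extremal cyclic (resp.\ linear) quasi-hereditary Nakayama algebras with $n+1$ simple modules. Since the apex is canonical there is no rotational over-counting, so $a_n$ is the number of extremal cyclic Kupisch series marked at the apex and $b_n$ the number of extremal linear ones. Sorting the one-point extensions of the previous step according to whether they keep the quiver in the same family or move between the cyclic and linear families, the fibre count should give
\[
a_n = a_{n-1} + b_n, \qquad b_n = a_{n-1} + b_{n-1} \qquad (n \geq 2),
\]
with base values $a_1 = b_1 = 1$, realised respectively by the cyclic algebra with Kupisch series $(3,2)$ and by $K\DynA_2$ with series $(2,1)$. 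It would then remain to solve this system. Since $F_1 = F_2 = 1$ and
\[
F_{2n} = F_{2n-1} + F_{2n-2}, \qquad F_{2n-1} = F_{2n-2} + F_{2n-3}
\]
for all $n \geq 2$, the sequences $(F_{2n})_{n \geq 1}$ and $(F_{2n-1})_{n \geq 1}$ satisfy the same system with the same initial values, so by induction $a_n = F_{2n}$ and $b_n = F_{2n-1}$, as claimed. The hard part of the whole argument is the second step: identifying the rigid shape of the extremal Kupisch series and verifying that the extension operations have exactly the asserted fibres.
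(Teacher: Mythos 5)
Your opening reduction is correct and is essentially the same normalisation the paper performs implicitly: using S-connectedness, $\min(\mathcal{O}_A)=1$ (cyclic) or $0$ (linear), and the count $\lambda(A)=\sum_{j\neq 1}m_j$, equality in Brown's bound is indeed equivalent to $m_j=1$ for all $2\le j\le d$; in the paper's language this says $\gldim A=r+1$ (resp.\ $r$) where $r$ is the number of defining relations. The problem is that everything after that point is a conjecture rather than a proof, and you say so yourself: the ``rigid recursive shape'' of the extremal Kupisch series is exactly the content of the paper's chain condition $k_{2i-1}\le k_{2i-2}<k_{2i+1}$ on the terminal points of the relations (Theorem \ref{thmChain} and its linear analogue), and establishing it requires the syzygy bookkeeping you only gesture at. Without it the theorem is not proved.

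The proposed recursion is also not in a usable state. A one-point extension changes the number of simples by one, so a fibration of extension maps would yield relations of the form $a_n=\alpha a_{n-1}+\beta b_{n-1}$; your system $a_n=a_{n-1}+b_n$ couples quantities at the same index $n$, which cannot come from the extension structure you describe and is only justified by the observation that the Fibonacci numbers happen to satisfy it. Moreover, for cyclic Nakayama algebras there is no one-point extension staying inside the class: inserting a vertex into the cycle is a different operation, and how it moves the projective dimensions of the simples (and whether it preserves extremality, and with what fibre cardinalities) is precisely the unverified hard part. The paper sidesteps all of this: it characterises the extremal algebras by the chain condition, counts the chains directly as $E(n,r)=\binom{n+r-2}{2r-1}$ and $L(n,r)=\binom{n+r-3}{2r-2}$, and then proves $\sum_r E(n,r)=F_{2n-2}$ and $\sum_r L(n,r)=F_{2n-3}$ by the three-term recursion $a_{n+1}=3a_n-a_{n-1}$ for even-indexed Fibonacci numbers. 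If you want to pursue your route, the minimum you must supply is (i) the classification of extremal Kupisch series, and (ii) a genuine bijective or fibre-counting argument producing a well-founded recursion in $n$.
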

In the last section we give an outlook on other classes of algebras with regards to the property of being S-connected and we also give an example of a quasi-hereditary algebra that is not S-connected.
\section{Preliminaries}
We assume that all algebras are finite dimensional, non-semisimple and connected over an algebraically closed field $K$ and modules are right modules unless stated otherwise. We assume that the reader is familiar with the representation theory and homological algebra of finite dimensional algebras and refer for example to the book \cite{SkoYam}. $J$ denotes the Jacobson radical of an algebra $A$ and $D:=\Hom_A(-,K)$ the natural duality. Since we work over an algebraically closed field, every finite dimensional algebra $A$ is Morita equivalent to a quiver algebra $KQ/I$ and since all our notions are invariant under Morita equivalence, we can assume that our algebras are given by quiver and relations.
Let $\{e_1,...,e_n \}$ be a complete set of primitive orthogonal idempotents.
We denote the simple $A$-modules corresponding to the primitive idempotent $e_i$ by $S_i=e_i A/e_i J$, the indecomposable projective $A$-modules corresponding to $e_i$ by $P_i=e_i A$ and the indecomposable injective $A$-modules corresponding to $e_i$ by $I_i=D(Ae_i)$. When $M$ is a module, we denote by $P(M)$ the projective cover of $M$.
A \emph{Nakayama algebra} $A$ is an algebra such that every indecomposable $A$-module is uniserial, which means it has a unique composition series. It is well known that a Nakayama algebra $A$ has an acyclic quiver if and only if $A$ has a simple module of projective dimension zero.
We call a Nakayama algebra $A$ linear if its quiver is acyclic and we call $A$ cyclic if its quiver is not acyclic.
The \emph{Kupisch series} of a Nakayama algebra with $n$ simple modules is the sequence $[c_1,...,c_n]$ with $c_i = \dim(e_i A)$ that uniquely determines the Nakayama algebra up to isomorphism. 
The Kupisch series is constant (that is $c_i=c_j$ for all $i,j$) if and only if $A$ is selfinjective.
When $A$ is cyclic and not selfinjective, the Kupisch series is determined up to a cyclic shift by the conditions $c_{r}=c_{r+1}+1$ for some $r \in \{1,...,n\}$, $c_i -1 \leq c_{i+1} $ for all  $i \in \{1,...,n \}$ and $c_i \geq 2$ for all $i$ where we set $c_{n+1}=c_1$.

The quiver of a Nakayama algebra with cyclic quiver:

\begin{center}
\begin{tikzpicture}
\foreach \ang\lab\anch in {90/1/north, 45/2/{north east}, 0/3/east, 270/i/south, 180/{n-1}/west, 135/n/{north west}}{
  \draw[fill=black] ($(0,0)+(\ang:3)$) circle (.08);
  \node[anchor=\anch] at ($(0,0)+(\ang:2.8)$) {$\lab$};
}

\foreach \ang\lab in {90/1,45/2,180/{n-1},135/n}{
  \draw[->,shorten <=7pt, shorten >=7pt] ($(0,0)+(\ang:3)$) arc (\ang:\ang-45:3);
  \node at ($(0,0)+(\ang-22.5:3.5)$) {$\alpha_{\lab}$};
}

\draw[->,shorten <=7pt] ($(0,0)+(0:3)$) arc (360:325:3);
\draw[->,shorten >=7pt] ($(0,0)+(305:3)$) arc (305:270:3);
\draw[->,shorten <=7pt] ($(0,0)+(270:3)$) arc (270:235:3);
\draw[->,shorten >=7pt] ($(0,0)+(215:3)$) arc (215:180:3);
\node at ($(0,0)+(0-20:3.5)$) {$\alpha_3$};
\node at ($(0,0)+(315-25:3.5)$) {$\alpha_{i-1}$};
\node at ($(0,0)+(270-20:3.5)$) {$\alpha_i$};
\node at ($(0,0)+(225-25:3.5)$) {$\alpha_{n-2}$};

\foreach \ang in {310,315,320,220,225,230}{
 \draw[fill=black] ($(0,0)+(\ang:3)$) circle (.02);
}
\end{tikzpicture}

\end{center}

The quiver of a Nakayama algebra with acyclic quiver:
\begin{center}
\begin{center}
\begin{tikzpicture}
\draw[thick,->] (0,0) node{$\bullet$} --(0.9,0); 
\draw[thick,->] (1,0) node{$\bullet$} --(1.9,0); 
\draw[dashed,->] (2,0) node{$\bullet$} --(4,0); 
\draw[thick,->] (4,0) node{$\bullet$} --(4.9,0); 
\draw[thick] (5,0) node{$\bullet$} ;
\draw (0,-0.4) node{$1$} ; 
\draw (1,-0.4) node{$2$} ;
\draw (2,-0.4) node{$3$} ;
\draw (4,-0.4) node{$n\!\!-\!\!1$} ;
\draw (5,-0.4) node{$n$} ;
\end{tikzpicture}
\end{center}
\end{center}

An algebra $A$ is called \emph{quasi-hereditary} over a poset $X$ if there is a bijection from $X$ to the set of isomorphism classes of simple $A$-modules such that there exist for each $x \in X$ quotient modules $\Delta(x)$ of $P_x$ such that the following two conditions are satisfied:
\begin{enumerate}
\item The kernel of the canonical epimorphism $\Delta(x) \rightarrow S_x$ is filtered by simple modules $S_y$ with $y < x$.
\item The kernel of the canonical epimorphism $P_x \rightarrow \Delta(x)$ is filtered by $\Delta(t)$ with $t >x$.
\end{enumerate}
Every quasi-hereditary algebra has finite global dimension and every quiver algebra with an acyclic quiver is quasi-hereditary.
We refer for example to \cite{DR} for more on quasi-hereditary algebras.
We will need the following characterization of quasi-hereditary Nakayama algebras:
\begin{theorem}\cite[Proposition 3.1]{UY} \label{UYtheorem}
A Nakayama algebra $A$ is quasi-hereditary if and only if $A$ has a simple module of projective dimension zero or a simple module of projective dimension two.
\end{theorem}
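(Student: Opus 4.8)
The plan is to reduce to the cyclic case and there match quasi-heredity with the presence of a suitable heredity ideal, built from (respectively producing) a simple module of projective dimension $2$; throughout I read indices of simples and projectives modulo $n$. By the two facts recalled above, a Nakayama algebra has an acyclic quiver if and only if it has a simple of projective dimension $0$, and an acyclic quiver algebra is quasi-hereditary. Hence if $A$ has a simple of projective dimension $0$ it is acyclic, hence quasi-hereditary; and if $A$ is acyclic it is quasi-hereditary and has a simple of projective dimension $0$. So we may assume $A$ is cyclic, with Kupisch series $[c_1,\dots,c_n]$ satisfying $c_{i+1}\ge c_i-1$, $c_i\ge 2$ and $c_r=c_{r+1}+1$ for some $r$, and it remains to prove that $A$ is quasi-hereditary if and only if $\pdim S_i=2$ for some $i$.

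For the ``if'' direction, suppose $\pdim S_i=2$ and write $\Omega^2 S_i\cong P_k$; after a cyclic relabelling assume $i=n$. Then $\Omega S_n=\rad P_n$ is uniserial of length $c_n-1$ with top $S_1$, and being non-projective ($\pdim S_n\ne 1$) it is a proper quotient of $P_1$, so $\Omega^2 S_n=\rad^{\,c_n-1}P_1$, which has length $c_1-c_n+1$ and top $S_{c_n}$. Since this module is the projective $P_k$, we get $k\equiv c_n$ and $c_k=c_1-c_n+1$; but then the chain of Kupisch inequalities $c_k=c_{c_n}\ge c_{c_n-1}-1\ge\dots\ge c_1-(c_n-1)=c_k$ must consist of equalities, which forces $c_n\le n$ (so $k=c_n$) and $c_{k-m}=c_k+m$ for $0\le m\le k-1$. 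Feeding these relations into the Kupisch inequalities on the complementary arc shows that $S_k$ occurs as a composition factor only in $P_1,\dots,P_k$, and that in each of these the largest uniserial submodule with top $S_k$ has length $c_k$, hence is isomorphic to $P_k$; thus $Ae_kA$ is projective as a right module, and $c_k\le n$ makes $e_kAe_k=Ke_k$ semisimple, so $Ae_kA$ is a heredity ideal. Deleting the vertex $k$ from the cycle leaves a linear quiver on $n-1$ vertices, so $A/Ae_kA$ is a directed Nakayama algebra and hence quasi-hereditary; therefore $A$ is quasi-hereditary.

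For the ``only if'' direction, let $A$ be cyclic and quasi-hereditary and let $AeA=\sum_{j\in T}Ae_jA$ be the first nonzero term of a heredity chain, so $T\ne\emptyset$. Fix $j\in T$. Semisimplicity of $eAe$ gives $j-1\notin T$, and projectivity of $Ae_jA$ as a right module forces $\rad P_{j-1}\cong P_j$, i.e.\ $c_{j-1}=c_j+1$. Now walk backwards and let $m\ge 2$ be minimal with $c_{j-m}\ne c_j+m$ or $j-m\in T$; such $m$ exists because the pattern $c_{j-l}=c_j+l$ cannot persist all the way around the cycle, and for $l<m$ we have $c_{j-l}=c_j+l$ and $j-l\notin T$. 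In the first case projectivity of $Ae_jA$, and in the second case semisimplicity of $eAe$, forces $c_{j-m}\le m$; using $c_{j-l}=c_j+l$ for $l<m$ one then computes that $\rad P_{j-m}$ is not projective while $\Omega^2 S_{j-m}\cong P_{j-m+c_{j-m}}$, a nonzero projective, so $\pdim S_{j-m}=2$.

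The main obstacle is the combinatorial bookkeeping with the Kupisch series in both directions. In the ``if'' direction the delicate point is to see that $S_k$ cannot reappear on the arc from $k+1$ to $n$ — equivalently, that the Kupisch values there stay small enough — which one must extract from the forced descent; and one should double-check that $A/Ae_kA$ is genuinely (directed) Nakayama, including the degenerate case $n=2$. In the ``only if'' direction one has to verify that the backwards walk always terminates at a vertex whose second syzygy is a nonzero projective, and to treat the small or wrap-around cases (e.g.\ $n\le 2$, or $m=n$) separately.
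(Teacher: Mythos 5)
The paper does not prove this statement at all: it is imported verbatim from Uuematsu--Yamagata \cite[Proposition 3.1]{UY}, so there is no in-paper argument to compare against. Your proposal is, as far as I can check, a correct self-contained proof, and it proceeds by the classical route (essentially that of the cited source): reduce to the cyclic case via the equivalence ``acyclic $\Leftrightarrow$ some simple is projective $\Rightarrow$ quasi-hereditary'', then translate quasi-heredity into the existence of a heredity ideal and match that against the Kupisch series. I verified the two computational cores. For ``if'': with $\pdim S_n=2$ and $\Omega^2S_n=\rad^{c_n-1}P_1\cong P_k$, the telescoping $c_k\geq c_{k-1}-1\geq\dots\geq c_1-(c_n-1)=c_k$ does force strict descent $c_j=c_k+(k-j)$ on $1\leq j\leq k$ and $c_n\leq n$; the bound $c_j\leq c_n+(n-j)=k+n-j$ on the complementary arc then rules out any occurrence of $S_k$ in $P_j$ for $j>k$ and any second occurrence for $j\leq k$, giving $e_jAe_kA\cong P_k$ for $j\leq k$ and $e_kAe_k=Ke_k$, so $Ae_kA$ is a heredity ideal with directed (hence quasi-hereditary) quotient. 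For ``only if'': taking the bottom heredity ideal $AeA$, semisimplicity of $eAe$ excludes $j-1\in T$, projectivity of the trace in $P_{j-1}$ forces $c_{j-1}=c_j+1$, the pattern $c_{j-l}=c_j+l$ must break at some minimal $m\leq n$, both breaking mechanisms yield $c_{j-m}\leq m$, and then $\Omega S_{j-m}$ is non-projective while $\Omega^2S_{j-m}\cong P_{j-m+c_{j-m}}$ because $c_{j-m+c_{j-m}}=c_j+m-c_{j-m}$, so $\pdim S_{j-m}=2$. Two cosmetic points: in the backwards walk it is really projectivity of the full trace $AeA$ (not just $Ae_jA$) that you use, though your observation that no other $T$-vertex occurs at depth $<m$ makes the two traces coincide; and one should say a word about left versus right projectivity of the heredity ideal (standard, given $e_k\rad(A)e_k=0$). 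Note also that the paper's Remark 1.5 suggests an alternative proof via the syzygy filtered algebra ($A$ quasi-hereditary iff $A$ or $\bm{\varepsilon}(A)$ is linear), and in fact the paper's Theorem \ref{Sconnectedquasihered} combined with Propositions \ref{result 1} and \ref{result 2} would give yet another route; your argument has the advantage of being elementary and independent of that machinery.
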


\subsection{Syzygy Filtrations}
In this work, we use the syzygy filtration method introduced in \cite{sen2019syz}. For details, we refer to the papers \cite{sen2018varphi}, \cite{sen2019syz} and the appendix on the syzygy filtration method written by C.M. Ringel in \cite{ringel2020}.

Here, it is convenient for us to use an irredundant system of relations defining cyclic Nakayama algebras. We consider the irredundant system of relations $\alpha_{k_{2i}}\ldots\alpha_{k_{2i-1}}=0$ where $1\leq i\leq  r$ and $k_{f}\in\left\{1,2,\ldots,n\right\}$ for a cyclic oriented quiver $Q$ where each arrow $\alpha_i$, $1\leq i\leq n-1$ starts at the vertex $i$ and ends at the vertex $i+1$ and $\alpha_n$ starts at vertex $n$ and ends at vertex $1$. Let $I$ be an admissible ideal generated by the relations:
\begin{gather}\label{relations}
\alpha_{k_2}\ldots\alpha_{k_1+1}\alpha_{k_1}\ \ =0 \\
\alpha_{k_4}\ldots\alpha_{k_3+1}\alpha_{k_3}\ \ =0\nonumber \\
\vdots \nonumber\\
\alpha_{k_{2r-2}}\ldots\alpha_{k_{2r-3}+1}\alpha_{k_{2r-3}}=0\nonumber\\
\alpha_{k_{2r}}\ldots\alpha_{k_{2r-1}+1}\alpha_{k_{2r-1}}=0\nonumber
\end{gather}
where $..<k_1<k_3<\ldots<k_{2r-1}<k_1<\ldots$ is cyclically ordered \cite{sen2018varphi}. Then the bound quiver algebra $kQ/I$ is a cyclic Nakayama algebra.

\begin{definition}
Let $\cS(A)$ be the complete set of representatives of the socles of projective modules over $A$ i.e. $\cS(A)=\left\{S_{k_2}, S_{k_4},\ldots,S_{k_{2r}}\right\}$ and let $\cS'(A)$ be the complete set of representatives of simple modules such that they are indexed by one cyclically larger indices of $\cS(A)$ i.e. $\cS'(A)=\left\{S_{k_{2}+1}, S_{k_4+1},\ldots,S_{k_{2r}+1}\right\}$. We define the base set $\cB(A)$ as:

\begin{gather}
\cB(A):=\{ \Delta_1\cong\begin{vmatrix}[1]
    S_{k_{2r}+1} \\
    \vdots  \\
    S_{k_{2}}
\end{vmatrix}\!, \Delta_2\cong\begin{vmatrix}[1]
    S_{k_{2}+1}  \\
    \vdots  \\
   S_{k_{4}}
\end{vmatrix}\!,\ldots,\Delta_j\cong\begin{vmatrix}[1]
   S_{k_{2(j-1)}+1}  \\
    \vdots  \\
    S_{k_{2j}}
\end{vmatrix}\!,..,\Delta_r\cong \begin{vmatrix}[1]
   S_{k_{2r-2}+1}  \\
    \vdots  \\
    S_{k_{2r}}
\end{vmatrix}\! \}
\end{gather}

\end{definition}
According to C.M. Ringel \cite{ringel2020} Appendix C, $\cB(A)$ modules can be realized as the first syzygy modules of the valley modules i.e. longest non-projective radicals of projective modules.

\begin{definition}\label{filteredalg} \cite{sen2019syz}
Let $A$ be a cyclic Nakayama algebra. The \emph{syzygy filtered algebra} $\bm{\varepsilon}(A)$ is:
\begin{align}
\bm{\varepsilon}(A):=\End_{A}\left(\bigoplus\limits_{S\in \cS'(A)}P(S)\right)
\end{align} 
The $d$th syzygy filtered algebra $\bm{\varepsilon}^d(A)$ is:
\begin{align}
\bm{\varepsilon}^d(A):=\End_{\bm{\varepsilon}^{d-1}(A)}\left(\bigoplus\limits_{S\in\cS'(\bm{\varepsilon}^{d-1}(A))}P(S)\right)
\end{align}
provided that $\bm{\varepsilon}^{d-1}(A)$ is a cyclic non-selfinjective Nakayama algebra.
\end{definition}

\begin{remark}\label{remarkfacts} We collect and summarize some useful results about the syzygy filtration method.
\begin{enumerate}[label=\roman*.]
\item\label{listfiltration}  The second and higher syzygies of $A$-modules have unique $\cB(A)$ filtrations \cite{sen2018varphi}.
\item\label{listcategory} The category of $\cB(A)$ filtered $A$-modules is equivalent to the category of ${\bm\varepsilon}(A)$-modules \cite{sen2019syz}.
\item\label{listnakayama} ${\bm\varepsilon}(A)$ is a Nakayama algebra \cite{sen2019syz}.
\item\label{listreduction} The $\bm\varepsilon$-construction reduces the following homological dimensions by exactly two: $\varphi$-dimension, finitistic, dominant, Gorenstein dimensions and the delooping level \cite{sen2019syz}, \cite{sen2020del}.
\item\label{listreduction2} If the global dimension of $A$ is infinite then there exists $d$ such that ${\bm\varepsilon}^d(A)$ is a selfinjective Nakayama algebra. \cite{sen2019syz}
\item\label{remarklis1} If the global dimension of $A$ is finite, then there exists a number $m$ such that ${\bm\varepsilon}^m(A)$ is a cyclic Nakayama algebra and ${\bm\varepsilon}^{m+1}(A)$ is a linear Nakayama algebra.\cite{sen2019syz}
\end{enumerate}
\end{remark}

\begin{remark} By using the syzygy filtration method, Theorem \ref{UYtheorem} can be restated as: a Nakayama algebra $A$ is quasihereditary if and only if either $A$ or $\bm\varepsilon(A)$ is linear Nakayama algebra.
\end{remark}

We will also need the following result of Madsen for Nakayama algebras:

\begin{proposition}\cite[Proposition 5.1 (including the proof)]{Mad} \label{madsenresult}
Let $A$ be a Nakayama algebra and $M$ an indecomposable $A$-module with odd projective dimension.
Then 
$$\pdim M= \sup \{ \pdim S | S \ \text{is a simple composition factor of} \ M \}.$$

\end{proposition}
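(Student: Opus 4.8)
The inequality $\pdim M \le \sup\{\pdim S \mid S \text{ a composition factor of } M\}$ holds for an arbitrary module over any finite dimensional algebra: it follows by induction on the composition length from the short exact sequence $0 \to \rad M \to M \to \top M \to 0$ together with the standard estimate $\pdim B \le \max(\pdim A,\pdim C)$ for a short exact sequence $0\to A\to B\to C\to 0$. Thus the content of the proposition is the reverse inequality, and it suffices to prove that every composition factor $S$ of $M$ satisfies $\pdim S \le \pdim M$ whenever $\pdim M$ is odd.

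The plan is to prove this last statement by strong induction on the odd value $d=\pdim M$, the induction step lowering projective dimension by exactly two (recall $\pdim \Omega N=\pdim N-1$ for non-projective $N$). The key structural input is the description of syzygies in a Nakayama algebra: every syzygy of an indecomposable is again indecomposable, being a uniserial submodule of an indecomposable projective, and if $N$ is indecomposable with $\top N=S_t$ and $\soc N=S_s$ of length $L$, then $\Omega N=\rad^{L}P_t$ has top $S_{s+1}$ and socle $\soc P_t$. Iterating once, one reads off the composition factors of $\Omega^2 N$ as the cyclic interval of simple modules running from $S_{t+c_t}$ to $S_{s+c_{s+1}}$, in terms of the Kupisch series $[c_1,\dots,c_n]$ recorded in the preliminaries.

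The heart of the argument is the following containment, which I will establish from the defining inequalities of the Kupisch series: if $M$ has $\top M=S_a$ and $\soc M=S_b$, then for every composition factor $S_c$ of $M$, so that $c$ lies in the cyclic interval $[a,b]$, the composition factors of $\Omega^2 S_c$ are among those of $\Omega^2 M$. Using the description above this reduces to the two endpoint estimates $c+c_c\ge a+c_a$ and $c+c_{c+1}\le b+c_{b+1}$, each of which follows by telescoping the Kupisch inequalities $c_{i+1}\ge c_i-1$ and $c_i\le c_{i+1}+1$ the appropriate number of times, the linear case being handled analogously with the sink playing the role of a terminating vertex. Granting this, the induction closes: for a composition factor $S_c$ with $\pdim S_c\ge 2$ one has $\pdim S_c=\pdim\Omega^2 S_c+2$, and
\[
\pdim \Omega^2 S_c \;\le\; \sup\{\,\pdim S' \mid S' \text{ a composition factor of } \Omega^2 S_c\,\} \;\le\; \sup\{\,\pdim S' \mid S' \text{ a composition factor of } \Omega^2 M\,\} \;=\; \pdim \Omega^2 M \;=\; d-2,
\]
where the first inequality is the easy direction applied to $\Omega^2 S_c$, the second is the containment just described, and the middle equality is the induction hypothesis applied to the indecomposable module $\Omega^2 M$, which has odd projective dimension $d-2<d$; factors with $\pdim S_c\le 1$ satisfy $\pdim S_c\le d$ trivially. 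Hence $\pdim S_c\le d$ for all composition factors, as required.

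The main obstacle, and the point where the hypothesis that $\pdim M$ is \emph{odd} is indispensable, is the base case together with the bookkeeping of these cyclic intervals. Since the induction lowers $\pdim M$ by two and preserves parity, for odd $d$ it terminates at $d=1$; there $\Omega^2 M=0$, and the same containment forces $\Omega^2 S_c=0$ for every composition factor, that is $\pdim S_c\le 1$. For even projective dimension the analogous induction would instead bottom out at $\pdim M\in\{0,2\}$, where the supremum over composition factors genuinely exceeds $\pdim M$ -- already a projective module may have composition factors of arbitrarily large projective dimension -- which is precisely why no such identity can hold in the even case. I therefore expect the only delicate points to be a careful treatment of the degenerate situations in which the relevant syzygies vanish or become projective, and the verification that the endpoint estimates survive the cyclic wrap-around in the non-acyclic case; both are governed entirely by the Kupisch inequalities.
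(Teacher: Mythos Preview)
The paper does not supply its own proof of this proposition; it is quoted verbatim from Madsen \cite{Mad} and used as a black box in the proof of Proposition~\ref{result 1}. There is therefore nothing in the paper to compare your argument against.

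Your argument itself is essentially correct. The reduction of the nontrivial inequality to the containment statement ``every composition factor of $\Omega^{2}S_{c}$ already occurs as a composition factor of $\Omega^{2}M$'' is the right idea, and your two endpoint estimates $c+c_{c}\ge a+c_{a}$ and $c+c_{c+1}\le b+c_{b+1}$ do follow by telescoping the Kupisch inequality $c_{i+1}\ge c_{i}-1$ exactly as you say. The base case $d=1$ also closes: $\Omega^{2}M=0$ forces $a+c_{a}>b+c_{b+1}$, hence equality $c_{i+1}=c_{i}-1$ at every step from $a$ to $b+1$, and so $\Omega^{2}S_{c}=0$ for each $c$ in the interval. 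The induction step then runs cleanly since $\Omega^{2}M$ is again indecomposable of odd projective dimension $d-2$. One point you should make fully explicit is that the interval comparison is carried out in $\mathbb{Z}$ (on the universal cover of the cyclic quiver), not modulo $n$; only then do the two endpoint inequalities genuinely yield containment of composition-factor sets after reduction modulo $n$. This is implicit in what you wrote but deserves a sentence, since it is exactly the ``cyclic wrap-around'' you flag as delicate.
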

$\mathbb{N}_0$ denote the natural numbers including 0.
For two natural numbers $a,b$ we denote by $[a,b]:=\{k \in \mathbb{N}_0 \mid a \leq k \leq b \}$ the interval between $a$ and $b$ in $\mathbb{N}_0$.

\section{S-connected algebras and quasi-hereditary Nakayama algebras}
For a finite dimensional algebra $A$, we define $\mathcal{O}_A:= \{ \pdim S | S$ simple $\}$. We say that an algebra $A$ of finite global dimension is \emph{S-connected} if $\mathcal{O}_A$ is an interval, that is we have that for every natural number $k$ with $\pdim S_1 \leq k \leq \pdim S_2$ for two simple $A$-modules $S_1 , S_2$ we also have $k=\pdim S_3$ for some simple $A$-module $S_3$. Not every algebra of finite global dimension is S-connected as the next examples shows:
\begin{example} \label{examplenak}
Let $A$ be the Nakayama algebra with Kupisch series [3,4,4]. Then $A$ has global dimension 4 and the simple $A$-modules have projective dimension 1,3 and 4 and thus $A$ is not S-connected.

\end{example}

However, the class of S-connected algebras includes many algebras, for example any algebra with an acyclic quiver as the next proposition shows:
\begin{proposition} \label{acyclicpropo}
Let $A=KQ/I$ be a quiver algebra with acyclic quiver $Q$. Then $A$ is S-connected.

\end{proposition}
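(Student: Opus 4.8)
The plan is to exhibit, for each simple module $S_i$, a concrete finite projective resolution and to control the projective dimensions via an induction along the topological order of the acyclic quiver $Q$. Since $Q$ is acyclic, we may relabel the vertices $1, \dots, n$ so that every arrow goes from a vertex of smaller index to one of larger index; equivalently, the simple module $S_n$ is projective (it is the socle of a vertex that is a source of no arrows, i.e.\ a sink in the usual convention). The key elementary fact is that for any module $M$, the radical $\rad P(M)$ of its projective cover is a quotient of a direct sum of projectives $P_j$ with $j$ strictly larger than any index occurring in $\top M$; more precisely, if $S_i$ has projective cover $P_i$, then $\rad P_i$ is supported only at vertices $j > i$, so $\Omega(S_i) = \rad P_i$ has a projective cover built from such $P_j$.

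First I would set up the induction: define $d_i := \pdim S_i$ and prove by descending induction on $i$ that $d_i < \infty$ and, crucially, that $d_i \leq 1 + \max\{ d_j : j > i, \ S_j \text{ occurs in } \rad P_i\}$ (with the convention that the max over the empty set is $-1$, giving $d_i = 0$ when $P_i$ is simple). The base case $i = n$ gives $d_n = 0$. For the inductive step, take the short exact sequence $0 \to \rad P_i \to P_i \to S_i \to 0$; since $\rad P_i$ is a module whose composition factors are all $S_j$ with $j > i$, its projective cover is a sum of $P_j$'s with $j > i$, and one iterates. This shows every $d_i$ is finite, hence $\gldim A < \infty$.

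Second, and this is where the interval property comes from, I would show that the set $\{d_1, \dots, d_n\}$ has no gaps. The mechanism is that for each $i$ with $d_i \geq 2$, there is a simple composition factor $S_j$ of $\rad P_i$ (equivalently, $S_j$ appears in $\Omega(S_i)$) with $d_j = d_i - 1$: indeed $\Omega(S_i)$ has a minimal projective resolution whose length is $d_i - 1$, and since $\Omega(S_i) = \rad P_i$ is filtered by simples $S_j$ ($j>i$), a standard horseshoe/long-exact-sequence argument forces $\pdim \Omega(S_i) = \max_j \pdim S_j$ over the composition factors $S_j$ of $\rad P_i$ — so the maximum is attained by some $S_j$, and that $S_j$ has projective dimension exactly $d_i - 1$. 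Starting from a simple of maximal projective dimension $m = \gldim A$ (attained at a simple, since $\gldim A = \sup_i \pdim S_i$ for any algebra) and iterating, one produces simples of projective dimension $m, m-1, m-2, \dots, 1, 0$, so every value in $[0, m]$ is hit. Hence $\mathcal{O}_A = [0, \gldim A]$ is an interval and $A$ is S-connected.

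The main obstacle is the second step: the claim that $\pdim \Omega(S_i)$ equals the maximum of $\pdim S_j$ over composition factors $S_j$ of $\rad P_i$. The inequality $\pdim \Omega(S_i) \leq \max_j \pdim S_j$ is immediate from the filtration of $\rad P_i$ by the $S_j$ and additivity of projective dimension along short exact sequences. The reverse inequality — that the maximum is actually achieved, i.e.\ no cancellation occurs that lowers the projective dimension — is the delicate point; one way to secure it is to note that if $\pdim \Omega(S_i) < \max_j \pdim S_j =: p$, then feeding the filtration of $\rad P_i$ into the long exact sequence in $\Ext$ against the simple $S_\ell$ realizing $\Ext^p$-nonvanishing for the top $S_j$ of the filtration yields a contradiction, since $\Ext^{p+1}(S_i, S_\ell)$ would have to absorb a nonzero $\Ext^p(S_j, S_\ell)$ with nothing above it to kill it. I would make this precise by inducting on the length of the simple filtration of $\rad P_i$, peeling off the top composition factor first so that the relevant $\Ext$ groups in degree $p$ and $p+1$ are controlled at each stage.
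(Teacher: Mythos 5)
Your first step (finiteness of all $d_i$ via a topological order) is fine, but the second step rests on a claim that is false: it is not true that $\pdim \rad P_i = \max_j \pdim S_j$ over the composition factors $S_j$ of $\rad P_i$, even when $\pdim S_i \geq 2$. Concretely, take the quiver with vertices $1,\dots,5$, arrows $a\colon 1\to 2$, $b\colon 1\to 3$, $c\colon 2\to 4$, $d\colon 3\to 4$, $e\colon 4\to 5$, and relations $ca=db$ and $ec=0$. One checks $\pdim S_5=0$, $\pdim S_4=1$, $\pdim S_3=1$ (as $\rad P_3\cong P_4$), $\pdim S_2=2$ (as $\rad P_2\cong S_4$), and $\Omega(\rad P_1)\cong P_4$ is projective, so $\pdim \rad P_1=1$ and $\pdim S_1=2$. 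The composition factors of $\rad P_1$ are $S_2,S_3,S_4$ with projective dimensions $2,1,1$: the maximum is $2$, yet $\pdim\rad P_1=1$. This also shows why your proposed repair of the ``delicate point'' cannot work: in the long exact sequences the top class in $\Ext^2(S_2,-)$ \emph{is} absorbed by connecting maps coming from the composition factors of projective dimension $1$, so there is no contradiction to derive. What your chain argument actually needs is only the weaker assertion that some composition factor of $\rad P_i$ has projective dimension exactly $\pdim S_i-1$ (which happens to hold in this example via $S_3$), but you give no valid proof of that, and I do not see how to obtain one along these lines; note that the analogous ``projective dimension is attained on a composition factor'' principle already fails for modules of even projective dimension over the cyclic Nakayama algebra of Example \ref{examplenak}, which is why the paper's Proposition \ref{madsenresult} is restricted to odd projective dimension. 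There is also a small gap at the bottom of your chain: when $d_i=1$ the module $\rad P_i$ is projective but need not have a composition factor of projective dimension $0$, so the value $0$ should instead be obtained from the existence of a sink.

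For comparison, the paper's proof sidesteps all of this with an induction on the number of vertices: delete a source $v$ to get $B$, observe that the remaining simples keep their projective dimensions, that $\pdim S_v\le \gldim B+1$ because $\Omega^1(S_v)$ is a $B$-module, and that $0\in\mathcal{O}_B$ since acyclic quivers have sinks; hence $\mathcal{O}_A$ equals $\mathcal{O}_B$ or $\mathcal{O}_B$ with one value appended at the top, an interval either way. If you want to salvage your top-down approach you would need a genuinely new argument for the existence, for each $i$ with $\pdim S_i\ge 2$, of a composition factor of $\rad P_i$ of projective dimension exactly $\pdim S_i-1$; as it stands the proof is not valid.
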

\begin{proof}
We use induction on the number of points of $Q$ or equivalently the number of simple $A$-modules. The statement is certainly true when the quiver $Q$ has exactly one point. Now assume that $A$ has $n \geq 2$ simple modules and that the statement is true for all algebras with an acyclic quiver and with at most $n-1$ simple modules.
Let $v$ be a source vertex of $Q$, and $B$ the quiver algebra obtained by removing $v$. By induction, $B$ has simple modules of all projective dimensions up to $\gldim B$. 

The simple modules for $B$ have the same projective dimensions as the corresponding simple modules for $A$, and the remaining simple module $S$ corresponding to the vertex $v$ in $A$ has projective dimension at most $\text{gldim} B+1$. To see this, assume that $S$ has projective dimension strictly larger than $\gldim B +1$. Then the module $\Omega^1(S)$ has projective dimension at least $\gldim B +1$ and is also a $B$-module, which is a contradiction.

By induction $\mathcal{O}_B=[a,b]$ is an interval and we saw that $\mathcal{O}_A=\mathcal{O}_B$ or $\mathcal{O}_A=[a,b+1]$ and in both cases $\mathcal{O}_A$ is an interval and the statement is proven. \qedhere

\end{proof}

\begin{corollary} \label{linear}
Let $A$ be a linear Nakayama algebra with global dimension $g$. Then $\mathcal{O}_A=[0,g]$ and $A$ is S-connected.
\end{corollary}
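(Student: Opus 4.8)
The plan is to deduce this corollary directly from Proposition~\ref{acyclicpropo} together with the elementary homological facts about linear Nakayama algebras. First I would observe that a linear Nakayama algebra has, by definition, an acyclic quiver (a linearly oriented $\mathbb{A}_n$), so Proposition~\ref{acyclicpropo} immediately gives that $A$ is S-connected, i.e.\ $\mathcal{O}_A$ is an interval. It therefore remains only to identify the two endpoints of that interval: the minimum is $0$ and the maximum is $g=\gldim A$.

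For the upper endpoint, since $\gldim A = g$ and the global dimension of an algebra equals the supremum of the projective dimensions of the simple modules, there is some simple module of projective dimension exactly $g$, so $g \in \mathcal{O}_A$ and $g = \max(\mathcal{O}_A)$. For the lower endpoint, I would use the standard fact (recalled in the Preliminaries: a Nakayama algebra has an acyclic quiver iff it has a simple module of projective dimension zero) that a linear Nakayama algebra always possesses a simple module with $\pdim S = 0$, namely the simple projective module $P_n = S_n$ sitting at the sink of the quiver. Hence $0 \in \mathcal{O}_A$ and $0 = \min(\mathcal{O}_A)$. Combining these with the fact that $\mathcal{O}_A$ is an interval forces $\mathcal{O}_A = [0,g]$.

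I do not anticipate any real obstacle here; the corollary is essentially a specialisation of the proposition, and the only content beyond it is pinning down the endpoints, both of which follow from facts already stated in the Preliminaries. If one wanted to be fully self-contained, the only point worth spelling out is why a linearly oriented quiver has a simple projective: the unique sink vertex $v$ has $e_v J = 0$ since no arrows leave $v$ and the relations are admissible, so $P_v = e_v A = S_v$ has projective dimension $0$.
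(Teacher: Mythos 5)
Your proposal is correct and is exactly the intended derivation: the paper states this as an immediate corollary of Proposition~\ref{acyclicpropo}, with the endpoints pinned down by the simple projective at the sink (giving $0\in\mathcal{O}_A$) and the fact that the global dimension is attained on a simple module (giving $g=\max\mathcal{O}_A$). Nothing is missing.
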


We now study properties of $\mathcal{O}_A$ when $A$ is a general Nakayama algebra of finite global dimension.

\begin{proposition} \label{result 1}
Let $A$ be a connected Nakayama algebra with finite global dimension $g$.
For every odd number $l$ with $1 \leq l \leq g$ there exists a simple module with projective dimension $l$.
\end{proposition}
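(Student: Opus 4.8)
The plan is as follows. Fix an odd $l$ with $1 \le l \le g$. The idea is to exhibit an \emph{indecomposable} $A$-module $M$ with $\pdim M = l$ and then invoke Madsen's Proposition~\ref{madsenresult}: because $l$ is odd, $\pdim M = \sup\{\pdim T \mid T \text{ a simple composition factor of } M\}$, and since $M$ has only finitely many composition factors this supremum is attained, so some simple module $T$ satisfies $\pdim T = l$, which is exactly what we want.

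To construct $M$, I would first use that the global dimension of any finite dimensional algebra equals the maximum of the projective dimensions of its simple modules; hence there is a simple module $S$ with $\pdim S = g$. Now set $M := \Omega^{g-l}(S)$. Since $0 \le g-l < g = \pdim S$, none of the modules $S, \Omega S, \dots, \Omega^{g-l-1}S$ is projective, so $M$ is non-zero and, each syzygy step dropping the projective dimension by exactly one, $\pdim M = g - (g-l) = l$. For indecomposability: over a Nakayama algebra every indecomposable module is uniserial, and the syzygy of an indecomposable module is a submodule of an indecomposable projective module, hence uniserial, hence indecomposable (or zero); iterating $g-l$ times shows that $M$ is indecomposable.

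I do not anticipate a genuine obstacle. The one place where oddness of $l$ is essential is the appeal to Madsen's proposition, and this use is unavoidable: the analogous statement for even $l$ is false, as the algebra with Kupisch series $[3,4,4]$ from Example~\ref{examplenak} has global dimension $4$ but no simple module of projective dimension $2$. Thus the only points requiring a little care are that the module $M$ produced above is genuinely indecomposable (so that Madsen's result applies) and has projective dimension \emph{exactly} $l$ rather than merely at most $l$; both are immediate from the uniserial structure of modules over a Nakayama algebra as sketched above.
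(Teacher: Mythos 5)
Your proof is correct and follows essentially the same route as the paper: produce an indecomposable module of projective dimension $l$ and apply Madsen's Proposition~\ref{madsenresult} for odd projective dimension. The only difference is that you make explicit (via syzygies of a simple module of maximal projective dimension, using uniseriality to keep indecomposability) the existence step that the paper dismisses with ``clearly''.
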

\begin{proof}
The result is clear when $A$ is linear by \ref{linear}.
Clearly for each odd $l$ with $1 \leq l \leq g$ there exists an indecomposable module $M$ with projective dimension $l$ and by \ref{madsenresult} there also exists a simple composition factor of $M$ with projective dimension $l$.\end{proof}

The analogue result for even numbers is not true as we saw in the example \ref{examplenak}.
However, the following is still true:

\begin{proposition} \label{result 2}
Let $A$ be Nakayama algebra of finite global dimension with two simple modules $S_x,S_y$ satisfying $\pdim S_x=2m\leq\pdim S_y=2M$. Then for each number $t$ between $m$ and $M$ there exists a simple module $S_z$ satisfying $\pdim S_z=2t$.
\end{proposition}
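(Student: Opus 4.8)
The plan is to induct on the global dimension $g = \gldim(A)$. If $A$ is linear, the statement is immediate from Corollary~\ref{linear}, since then $\mathcal{O}_A = [0,g]$ already contains all even numbers in $[0,g]$. So from now on $A$ is cyclic, which forces $m \geq 1$, because a Nakayama algebra having a simple module of projective dimension $0$ is linear; and we may assume $m < M$, the case $m = M$ being vacuous, so that $2M \geq 4$ and hence $g \geq 4$.

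The heart of the argument is a translation between $\mathcal{O}_A$ and $\mathcal{O}_{\bm\varepsilon(A)}$ furnished by the syzygy filtration method: for every $k \geq 3$ one has $k \in \mathcal{O}_A$ if and only if $k - 2 \in \mathcal{O}_{\bm\varepsilon(A)}$, and $2 \in \mathcal{O}_A$ if and only if $\bm\varepsilon(A)$ is linear. The second equivalence is Theorem~\ref{UYtheorem} together with the remark following it (using that $A$ is cyclic, so has no simple of projective dimension $0$). For the first, the plan is to use that, by Remark~\ref{remarkfacts}(\ref{listfiltration}), the second syzygy $\Omega^2_A(S)$ of a simple $A$-module $S$ with $\pdim_A S \geq 2$ is $\cB(A)$-filtered, that under the equivalence of Remark~\ref{remarkfacts}(\ref{listcategory}) it corresponds to a $\bm\varepsilon(A)$-module of projective dimension $\pdim_A(S) - 2$, and that the simple $\bm\varepsilon(A)$-modules are the modules $\Delta_j \in \cB(A)$; one then moves between these $\bm\varepsilon(A)$-modules and the simple ones. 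I would also record that $\gldim(\bm\varepsilon(A)) = g - 2$, by Remark~\ref{remarkfacts}(\ref{listreduction}) since here the finitistic dimension equals the global dimension.

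Granting the translation, the induction closes as follows. As $2M \geq 4$, the translation gives $2M - 2 \in \mathcal{O}_{\bm\varepsilon(A)}$. If $m = 1$, then $2 \in \mathcal{O}_A$, so $\bm\varepsilon(A)$ is linear and $\mathcal{O}_{\bm\varepsilon(A)} = [0, g-2]$ by Corollary~\ref{linear}; since $2M - 2 \leq g - 2$, this interval contains $2, 4, \dots, 2M-2$, and translating back, together with $2 \in \mathcal{O}_A$, yields $\{2, 4, \dots, 2M\} \subseteq \mathcal{O}_A$. If $m \geq 2$, then also $2m \geq 4$, so $2m - 2 \in \mathcal{O}_{\bm\varepsilon(A)}$ as well; if $\bm\varepsilon(A)$ is linear one finishes as before using $[0,g-2] \supseteq \{2m-2, \dots, 2M-2\}$, and if $\bm\varepsilon(A)$ is cyclic one applies the induction hypothesis to $\bm\varepsilon(A)$ (a connected cyclic Nakayama algebra of finite global dimension $g-2 < g$) to its simple modules of projective dimensions $2(m-1) \leq 2(M-1)$, obtaining every even number in $[2(m-1), 2(M-1)]$ in $\mathcal{O}_{\bm\varepsilon(A)}$, and then translates back to get every even number in $[2m, 2M]$ in $\mathcal{O}_A$.

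The main obstacle is the even part of the translation. The odd part is easy and is essentially Propositions~\ref{result 1} and~\ref{madsenresult}: an indecomposable module of odd projective dimension has a simple composition factor of the same projective dimension, so odd values pass freely between $A$ and $\bm\varepsilon(A)$. For even projective dimensions Madsen's result genuinely fails -- Example~\ref{examplenak} already exhibits an indecomposable module of projective dimension $2$ all of whose composition factors have strictly larger projective dimension -- so one must instead pin down the even projective dimensions of the modules $\Delta_j \in \cB(A)$ directly, through their description as first syzygies of the valley modules $V_j$ and the short exact sequences relating $V_j$ to a projective module and a simple module. Verifying that these even values are exactly the even $k$ with $k+2 \in \mathcal{O}_A$ is the technical crux; a syzygy computation performed purely inside $A$ seems to break down precisely there, which is why the passage to $\bm\varepsilon(A)$ is the natural route.
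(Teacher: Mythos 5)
Your overall strategy --- reduce to the linear case through the syzygy filtered algebra, where Corollary \ref{linear} hands you all projective dimensions for free --- is the same as the paper's. But as written the argument has a genuine gap, and you flag it yourself: the even part of your ``translation'' ($k\in\mathcal{O}_A$ iff $k-2\in\mathcal{O}_{\bm\varepsilon(A)}$ for $k\geq 3$) is essentially the content of the proposition, and you leave it as ``the technical crux'' to be verified. The descent direction is the real problem: if $\pdim_A S=2M\geq 4$ for a simple $S$, then $\Omega^2_A(S)$ does correspond to an $\bm\varepsilon(A)$-module of projective dimension $2M-2$, but that module is merely $\cB(A)$-filtered, not simple, and for even projective dimension there is no Madsen-type result letting you pass to a simple composition factor --- Example \ref{examplenak} is exactly the obstruction you cite. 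So the statement $2M-2\in\mathcal{O}_{\bm\varepsilon(A)}$, which your induction needs at every step, is not established, and nothing in your outline supplies it.

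The paper's proof avoids the descent direction altogether, and this is the idea you are missing. Instead of carrying the specific values $2m, 2M$ down one $\bm\varepsilon$-step at a time, it jumps directly to the minimal $k$ for which $\bm\varepsilon^k(A)$ is linear (Remark \ref{remarkfacts}). There, every even value in $[0,\gldim\bm\varepsilon^k(A)]$ is attained by a simple module automatically, by Corollary \ref{linear} --- no information has to be transported downward from $A$ at all. Only the ascent direction is used: by the construction of $\cB$ and the equivalence of Remark \ref{remarkfacts}, each simple $\bm\varepsilon^k(A)$-module arises as a $2k$-th syzygy of a simple $A$-module, so each even value $2j$ attained below lifts to $2j+2k\in\mathcal{O}_A$, producing a full run of consecutive even values from $2k$ up to $2\lfloor\gldim A/2\rfloor\geq 2M$ in $\mathcal{O}_A$. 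If you reorganize your induction this way, the only statement left to justify is the lifting one (simple $\bm\varepsilon(A)$-modules are second syzygies of simple $A$-modules, with projective dimension dropping by exactly two), which is precisely the part of your plan that does go through via the $\cB(A)$-construction; the problematic even-degree descent simply never arises.
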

\begin{proof}
The result is true when $A$ is linear by \ref{linear} and thus we can assume $A$ is a cyclic Nakayama algebra in the rest of the proof.
Since $A$ is of finite global dimension, by  remark \ref{remarkfacts} \ref{remarklis1}, there exists a minimal $k$ such that $\bm{\varepsilon}^{k}(A)$ is a \emph{linear} Nakayama algebra and $\bm{\varepsilon}^{k-1}(A)$ is a cyclic Nakayama algebra.
By proposition \ref{linear}, there exists simple $\bm{\varepsilon}^k(A)$-modules $S'_1,S'_2,\ldots,S'_p$ such that:
\begin{gather*}
\pdim_{\bm{\varepsilon}^k(A)} S'_1=2m'\\
\pdim_{\bm{\varepsilon}^k(A)} S'_2=2(m'+1)\\
\vdots\\
\pdim_{\bm{\varepsilon}^k(A)} S'_p=2M'
\end{gather*}
where $m'\leq M'$. But every $\bm{\varepsilon}^k(A)$-module can be obtained as a $2k^{th}$ syzygy of an $A$-module (remark \ref{remarkfacts} \ref{listreduction}). Now it is enough to show that these $A$-modules are actually simple $A$-modules. This follows from the construction of $\cB(A)$-modules in \cite{sen2019syz} (or $\Delta$-modules in \cite{sen2018varphi}), because the category of $\cB(A)$-filtered modules is equivalent to the category of ${\bm\varepsilon}(A)$ modules (remark \ref{remarkfacts}  \ref{listcategory}) and in particular simple ${\bm\varepsilon}(A)$ modules are equivalent to modules in $\cB(A)$. By induction (recall the construction of higher syzygy filtered algebras \ref{filteredalg}), every simple $\bm{\varepsilon}^k(A)$-module can be obtained as the $2k^{th}$ syzygy of simple $A$ modules. Without loss of generality, let $\Omega^{2k}(S_{x_i})\cong {}_{A}\Delta_i$ for $i$. We get:
\begin{gather*}
\pdim_{A} S_{x_1}=2k+\pdim_{A} \Delta_1=2k+\pdim_{\bm{\varepsilon}^k(A)}S'_1=2k+2m'\\
\pdim_{A} S_{x_2}=2k+\pdim_{A} \Delta_2=2k+\pdim_{\bm{\varepsilon}^k(A)}S'_2=2k+2(m'+1)\\
\vdots\\
\pdim_{A} S_{x_p}=2k+\pdim_{A} \Delta_p=2k+\pdim_{\bm{\varepsilon}^k(A)}S'_p=2k+2M'
\end{gather*}
Setting $m=k+m'$ and $M=k+M'$ finishes the proof.
\end{proof}

We can now prove our first main result:

\begin{theorem} \label{Sconnectedquasihered}
Let $A$ be a Nakayama algebra. Then $A$ is S-connected if and only if $A$ is quasi-hereditary.
\end{theorem}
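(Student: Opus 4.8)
The plan is to prove both implications separately, leaning on Theorem~\ref{UYtheorem} (the Uematsu--Yamagata characterisation) which tells us that $A$ is quasi-hereditary if and only if $A$ has a simple module of projective dimension $0$ or of projective dimension $2$. So it suffices to show: $A$ is S-connected $\iff$ $A$ has a simple module of projective dimension $0$ or $2$.

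For the ``only if'' direction, suppose $A$ is S-connected, so $\mathcal{O}_A=[a,b]$ is an interval with $a=\min(\mathcal{O}_A)$. If $A$ is linear then by Corollary~\ref{linear} we have $a=0$ and we are done, so assume $A$ is cyclic. Then $A$ has no simple of projective dimension $0$, so $a\ge 1$; I want to show $a$ must in fact be $1$ and that $2\in\mathcal{O}_A$, forcing a simple of projective dimension $2$. Since $A$ is cyclic and of finite global dimension, the irredundant system of relations \eqref{relations} guarantees a relation, hence (by the standard syzygy computation for Nakayama algebras) a simple module of projective dimension $1$; more cleanly, Proposition~\ref{result 1} already gives a simple of projective dimension $1$ whenever $g\ge 1$. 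Thus $1\in\mathcal{O}_A$, so $a=1$. Now I need $2\in\mathcal{O}_A$. If $g=\gldim A=1$ then $A$ is hereditary and cyclic, impossible for a non-semisimple algebra over a field (a cyclic quiver algebra of finite global dimension is never hereditary), so $g\ge 2$; since $A$ is S-connected and $1,g\in\mathcal{O}_A$ with $g\ge 2$, the interval property gives $2\in\mathcal{O}_A$, i.e.\ some simple has projective dimension $2$, and Theorem~\ref{UYtheorem} yields quasi-heredity.

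For the ``if'' direction, suppose $A$ is quasi-hereditary. If $A$ is linear, Corollary~\ref{linear} says $\mathcal{O}_A=[0,g]$ is an interval, so $A$ is S-connected. So assume $A$ is cyclic; by the restatement of Theorem~\ref{UYtheorem} in the remark, either $A$ is linear (excluded) or $\bm\varepsilon(A)$ is a linear Nakayama algebra. Write $g=\gldim A$. I must show $\mathcal{O}_A=[1,g]$, i.e.\ every value $1,2,\dots,g$ is the projective dimension of some simple. The odd values are handled directly by Proposition~\ref{result 1}. For the even values $2t$ with $1\le t\le \lfloor g/2\rfloor$: since $\bm\varepsilon(A)$ is linear, Corollary~\ref{linear} gives that $\mathcal{O}_{\bm\varepsilon(A)}=[0,g-2]$ is a full interval (using Remark~\ref{remarkfacts}\ref{listreduction}, which says the $\bm\varepsilon$-construction drops the relevant homological dimensions by exactly two, so $\gldim\bm\varepsilon(A)=g-2$), and then Proposition~\ref{result 2} — whose proof shows precisely that simple $\bm\varepsilon^k(A)$-modules lift to simple $A$-modules with projective dimension shifted up by $2k$ — applied with $k=1$ transports the even projective dimensions $0,2,\dots,g-2$ of simple $\bm\varepsilon(A)$-modules to even projective dimensions $2,4,\dots,g$ attained by simple $A$-modules. (The case $g$ odd needs a small compatibility check that the top value $g$, being odd, is covered by Proposition~\ref{result 1}, and that there is no ``gap'' just below $g$; this is immediate since the odd values fill $[1,g]\cap(2\mathbb{Z}+1)$ and the even values fill $[1,g]\cap 2\mathbb{Z}$.) Combining, $\mathcal{O}_A=[1,g]$, so $A$ is S-connected.

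The main obstacle I anticipate is the ``if'' direction, specifically making rigorous the claim that simple $\bm\varepsilon(A)$-modules pull back to \emph{simple} $A$-modules rather than merely to indecomposable ones: this is exactly the content buried in the proof of Proposition~\ref{result 2} (via the identification of simple $\bm\varepsilon(A)$-modules with the $\cB(A)$-modules $\Delta_i$, which are themselves first syzygies of valley modules and whose own syzygies behave well), so I would invoke that proposition rather than re-deriving it. A secondary subtlety is handling the degenerate low-dimensional cases (small $g$, and ruling out cyclic hereditary algebras) and making sure the ``interval'' bookkeeping is airtight when $g$ has a given parity.
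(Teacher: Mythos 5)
Your proof is correct and follows essentially the same route as the paper's: both directions reduce to the Uematsu--Yamagata criterion (Theorem \ref{UYtheorem}), the linear case is Corollary \ref{linear}, the cyclic S-connected $\Rightarrow$ quasi-hereditary direction uses a simple of projective dimension $1$ plus the interval property to produce a simple of projective dimension $2$, and the converse fills in odd values via Proposition \ref{result 1} and even values via the syzygy-filtration machinery. The one (welcome) difference is in the even-value step: the paper invokes Proposition \ref{result 2} together with the existence of a simple of projective dimension $2$, whereas you run the lifting argument from that proposition's proof directly on the linear algebra $\bm{\varepsilon}(A)$, using $\mathcal{O}_{\bm{\varepsilon}(A)}=[0,\gldim A-2]$; this yields every even value up to $\gldim A$ without first having to locate a simple module of maximal even projective dimension, so it is, if anything, slightly more airtight than the paper's own phrasing.
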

\begin{proof}
The result is clear when $A$ is linear, since then it is clearly quasi-hereditary and also S-connected by \ref{linear}. \newline 
Now assume that $A$ has a cyclic quiver for the rest of this proof.
We can assume that $A$ is not selfinjective and thus the Kupisch series is not constant, since selfinjective algebras are neither S-connected nor quasi-hereditary (since we assume that our algebras are not semi-simple).
Then the simple module $S_r=e_r A/ e_r J$ has projective dimension equal to one since $e_r J$ is projective when $c_{r}=c_{r+1}+1$.
Assume that $A$ is quasi-hereditary with global dimension $g$.
By \ref{UYtheorem} $A$ has a simple modules of projective dimension 2. Thus by \ref{result 1} and \ref{result 2} for every values $l$ with $1 \leq l \leq g$ there exists a simple module with projective dimension equal to $l$ and $A$ is S-connected. \newline
Now assume that $A$ is S-connected. Since $A$ is cyclic and thus not hereditary, it has global dimension $g \geq 2$ and we saw also that $A$ has a simple module of projective dimension 1. Since $A$ is S-connected and $2 \in [1,g]$, $A$ has a simple module of projective dimension two and thus is quasi-hereditary by \ref{UYtheorem}.
\end{proof}

We now look at inequalities for the global dimension of S-connected algebras. For a value $c \in \mathcal{O}_A$ we define $\lambda_c(A):= |\{ i \mid pd(S_i) \neq c \}|$, the number of simple $A$-modules with projective dimension not equal to $c$. 
\begin{theorem} \label{inequtheorem}
Let $A$ be an S-connected algebra with $\min(\mathcal{O}_A)=a$. Then $\gldim(A) \leq a+ \lambda_c(A)$ for any $c \in \mathcal{O}_A$. 

\end{theorem}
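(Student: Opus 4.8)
The plan is to exploit the S-connectedness hypothesis to say that the set $\mathcal{O}_A$ of projective dimensions of simple modules is exactly the interval $[a,g]$, where $g = \gldim(A) = \max(\mathcal{O}_A)$ (the equality $\max(\mathcal{O}_A)=\gldim A$ is a standard fact: the global dimension of an Artin algebra equals the supremum of the projective dimensions of the simple modules). Once we know $\mathcal{O}_A = [a,g]$, the strategy is a counting argument. Partition the simple modules $S_1,\dots,S_n$ according to their projective dimension. Since every value in $[a,g]$ is attained, there are at least $g-a+1$ distinct values, and in particular there is at least one simple module of projective dimension $k$ for each $k \in [a,g]$. The value $c \in \mathcal{O}_A$ is one of these values, so among the $g-a+1$ ``slots'' $a, a+1, \dots, g$, removing the slot $c$ still leaves $g-a$ slots, each of which is occupied by at least one simple module whose projective dimension is $\neq c$.

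The key step is then the inequality $\lambda_c(A) \geq g - a$: the number of simple modules with projective dimension $\neq c$ is at least the number of attained values other than $c$, which is $(g-a+1) - 1 = g-a$, because distinct attained values are witnessed by distinct simple modules. Rearranging gives $g \leq a + \lambda_c(A)$, i.e. $\gldim(A) \leq a + \lambda_c(A)$, which is exactly the claim. So the proof is essentially: (i) identify $\mathcal{O}_A$ with $[a,g]$ using S-connectedness and the global-dimension formula; (ii) observe that each of the $g-a$ integers in $[a,g]\setminus\{c\}$ is the projective dimension of some simple module, and these simples are pairwise distinct; (iii) conclude $\lambda_c(A) \geq g-a$ and rearrange.

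I do not expect any serious obstacle here; the argument is a short combinatorial/counting deduction from the definition of S-connected together with the standard identity $\gldim A = \sup\{\pdim S : S \text{ simple}\}$. The only mild subtlety worth spelling out is the edge behaviour: one should check the formula is vacuously fine when $g = a$ (then $[a,g]\setminus\{c\} = \emptyset$, $\lambda_c(A) \geq 0$, and the inequality $g \leq a + \lambda_c(A)$ holds trivially), and that the bound is uniform in the choice of $c \in \mathcal{O}_A$ since the count $g-a$ of ``other'' values does not depend on which attained value $c$ is. No induction, no homological machinery beyond the global-dimension formula, and no appeal to the Nakayama structure is needed — the statement is genuinely a general fact about S-connected algebras, as the surrounding text advertises.
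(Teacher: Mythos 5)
Your argument is correct and is essentially identical to the paper's proof: both identify $\mathcal{O}_A$ with the interval $[a,g]$ via S-connectedness and the identity $\gldim A=\max(\mathcal{O}_A)$, then count that the $g-a$ attained values other than $c$ are witnessed by distinct simple modules, giving $\lambda_c(A)\geq g-a$ and hence $g\leq a+\lambda_c(A)$. No further comment is needed.
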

\begin{proof}
We have $\gldim A=a+(g-a)$ with $g= \gldim(A)$. There are $g-a+1$ distinct values for the projective dimensions of simple $A$-modules, namely $a,a+1,...,g-1,g$ since we assume that $A$ is S-connected. Thus the set $\{ i \mid  \pdim(S_i) \neq c \}$ has at least $g-a$ many different $i$ with $S_i=p$ such that $p \neq c$.
This shows $\gldim A = a+(g-a) \leq a+ |\{ i \mid  \pdim(S_i) \neq c \}|.$

\end{proof}

We record two corollaries of the previous theorem.

\begin{corollary}
Let $A$ be a quiver algebra with an acyclic quiver and $n$ simple modules. Then $A$ has global dimension at most $n-s \leq n-1$, where $s$ is the number of sinks in the quiver of $A$.

\end{corollary}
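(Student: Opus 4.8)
The plan is to derive this from Theorem \ref{inequtheorem} by choosing the value $c$ optimally. Let $A = KQ/I$ be a quiver algebra with acyclic quiver $Q$ and $n$ simple modules, and let $s$ be the number of sinks of $Q$. By Proposition \ref{acyclicpropo}, $A$ is S-connected, so Theorem \ref{inequtheorem} applies. Since $Q$ is acyclic, there is at least one source, and a source vertex $v$ gives a simple projective module $S_v$, so $0 \in \mathcal{O}_A$ and hence $\min(\mathcal{O}_A) = a = 0$. Thus the bound reads $\gldim(A) \leq \lambda_c(A)$ for every $c \in \mathcal{O}_A$.

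First I would observe that $0 \in \mathcal{O}_A$ by the source argument just given, so we may take $c = 0$ in Theorem \ref{inequtheorem}. Then $\lambda_0(A) = |\{ i \mid \pdim(S_i) \neq 0 \}| = n - |\{ i \mid \pdim(S_i) = 0\}|$, the number of simple modules that are \emph{not} projective. So the key step is to show that the number of simple projective modules is at least $s$, the number of sinks; equivalently, every sink vertex $w$ yields a simple projective $A$-module $S_w$. This is standard: if $w$ is a sink, no arrow starts at $w$, so $e_w J = 0$ (there are no nonzero paths of positive length starting at $w$), whence $P_w = e_w A = S_w$ is simple projective. Therefore $\lambda_0(A) \leq n - s$, and combining with Theorem \ref{inequtheorem} gives $\gldim(A) \leq \lambda_0(A) \leq n - s$. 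The final inequality $n - s \leq n-1$ holds because an acyclic quiver has at least one sink, so $s \geq 1$.

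I do not anticipate any real obstacle here; the only points requiring care are the two elementary quiver-theoretic facts — that an acyclic quiver has at least one source and at least one sink, and that a sink vertex produces a simple projective module (so that a source produces a simple projective in the opposite algebra, which is what puts $0$ in $\mathcal{O}_A$, while a sink produces a simple projective in $A$ itself, which is what bounds $\lambda_0(A)$). One should be slightly careful that distinct sinks give distinct (non-isomorphic) simple modules, but this is immediate since simple modules over a basic algebra are in bijection with vertices. Everything else is a direct substitution into the already-established Theorem \ref{inequtheorem}.
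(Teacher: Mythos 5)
Your proof is correct and follows essentially the same route as the paper: apply Theorem \ref{inequtheorem} with $c=0$, using that $\pdim S_x=0$ exactly when $x$ is a sink, so that $0\in\mathcal{O}_A$ and $\lambda_0(A)\leq n-s$. One small slip: your opening claim that a \emph{source} gives a simple projective (to put $0$ in $\mathcal{O}_A$) is inconsistent with your later, correct computation $e_wJ=0$ for a \emph{sink} $w$ --- under the paper's convention $P_i=e_iA$ it is the sinks that yield simple projectives, and since an acyclic quiver always has a sink this already gives $0\in\mathcal{O}_A$, so the source remark should simply be deleted.
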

\begin{proof}
For a simple $A$-module $S_x$ we have $\pdim S_x=0$ if and only if $x$ is a sink.
Thus $\min(\mathcal{O}_A)=0$ since the quiver of $A$ is acyclic and therefore has sinks. 
Then the result follows from \ref{inequtheorem} by choosing $c=0$, since then $n-s$ is the number of non-sink vertices which corresponds to the simple $A$-modules with non-zero projective dimensions.

\end{proof}

The previous corollary gives a slight generalisation with a very easy proof of the well known result $\gldim A \leq n-1$ for acyclic quiver algebras $A$ with $n$ simple modules, see for example \cite{Farn}.

As an easy corollary we obtain from \ref{inequtheorem} the inequality of Brown for quasi-hereditary Nakayama algebras with a very easy proof:
\begin{corollary} \label{browninequcorollary}
Let $A$ be a quasi-hereditary Nakayama algebra. 
\begin{enumerate}
\item If $A$ is linear, $\gldim A \leq \lambda_1(A)$.
\item If $A$ is cyclic, $\gldim A \leq \lambda_1(A)+1$.

\end{enumerate}

\end{corollary}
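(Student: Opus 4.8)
The plan is to deduce both inequalities directly from Theorem \ref{inequtheorem}. By Theorem \ref{Sconnectedquasihered} a quasi-hereditary Nakayama algebra is S-connected, so Theorem \ref{inequtheorem} applies and gives $\gldim(A) \leq a + \lambda_c(A)$ with $a = \min(\mathcal{O}_A)$ for any admissible $c \in \mathcal{O}_A$. The only work is then to pin down the value of $a$ in each of the two cases and to check that $1 \in \mathcal{O}_A$, so that we are allowed to take $c = 1$; note that with this choice $\lambda_1(A)$ is exactly Brown's quantity $\lambda(A)$.

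For the linear case I would argue as follows: since $A$ is acyclic it has a simple module of projective dimension $0$, so $\min(\mathcal{O}_A) = 0$, and by Corollary \ref{linear} in fact $\mathcal{O}_A = [0,g]$ with $g = \gldim A$. As $A$ is non-semisimple we have $g \geq 1$, hence $1 \in \mathcal{O}_A$, and Theorem \ref{inequtheorem} applied with $a = 0$ and $c = 1$ yields $\gldim A \leq \lambda_1(A)$.

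For the cyclic case I would first note that $A$ has no simple module of projective dimension $0$, because a Nakayama algebra has such a module precisely when its quiver is acyclic; thus $\min(\mathcal{O}_A) \geq 1$. Since a quasi-hereditary algebra has finite global dimension and a selfinjective algebra of finite global dimension is semisimple, $A$ is not selfinjective; hence, exactly as in the proof of Theorem \ref{Sconnectedquasihered}, the simple module $S_r$ corresponding to the index with $c_r = c_{r+1}+1$ has projective dimension $1$. Therefore $\min(\mathcal{O}_A) = 1$ and $1 \in \mathcal{O}_A$, and Theorem \ref{inequtheorem} with $a = 1$ and $c = 1$ gives $\gldim A \leq 1 + \lambda_1(A)$.

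I do not expect any real obstacle: the substantive content is already contained in Theorems \ref{Sconnectedquasihered} and \ref{inequtheorem}. The only point that needs a little attention is verifying that $c = 1$ is an admissible choice, i.e.\ that $1 \in \mathcal{O}_A$, in each case, which is why the two cases are handled separately rather than uniformly.
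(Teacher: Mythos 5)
Your proposal is correct and follows exactly the paper's own route: invoke Theorem \ref{Sconnectedquasihered} to get S-connectedness, then apply Theorem \ref{inequtheorem} with $a=0$ in the linear case and $a=1$ in the cyclic case. The only difference is that you explicitly verify $1\in\mathcal{O}_A$ so that $c=1$ is admissible, a point the paper leaves implicit; this is a welcome bit of extra care, not a deviation.
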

\begin{proof}
This follows from \ref{inequtheorem} since $A$ is S-connected by \ref{Sconnectedquasihered} and $\min(\mathcal{O}_A)=0$ when $A$ is linear and $\min(\mathcal{O}_A)=1$ when $A$ is cyclic.
\end{proof}

For an inequality for the global dimension of general Nakayama algebras with finite global dimension and applications to higher Auslander algebras we refer to \cite{MM} and \cite{MMZ}. The full solution to the conjecture on higher Auslander algebras described in \cite{MMZ} can be found in \cite{sen2020aus}.

\section{Enumeration of quasi-hereditary Nakayama algebras with maximal global dimension}
Quasi-hereditary Nakayama algebras satisfy Brown's inequality $\gldim A \leq \lambda_1(A)+1$ as we saw in \ref{browninequcorollary}.
In this section we investigate quasi-hereditary Nakayama algebras satisfying Brown's inequality sharply, that is Nakayama algebras such that $\gldim A = \lambda_1(A)+1$. We remark that $\gldim A= \lambda_1(A)+1$ can not happen for Nakayama algebras with a linear quiver since there we have $\gldim A \leq \lambda_1(A)$ by \ref{browninequcorollary}. We analyze the case of linear Nakayama algebras satisfying $\gldim A=\lambda_1(A)$ in the section \ref{secLinear}.

We say an irredundant system of relations form a chain if the terminal points of the relations \ref{relations} satisfy:
\begin{align}\label{fibochaineq}
k_{2i-1}\leq k_{2i-2}<k_{2i+1}
\end{align}
for all $i\in [1,r-1]$, together with cyclic orderings:
\begin{align}
k_1<k_3<\cdots< k_{2r-1}\\
k_2<k_4<\cdots < k_{2r}
\end{align}.
\begin{example}\label{examplefibo1}
Let $n=4$, $r=2$. All possible chains and the corresponding Kupisch series are:

\begin{enumerate}
\item $\alpha_2\alpha_1=0$,$\alpha_3\alpha_2=0$, $[4,3,2,2]$
\item $\alpha_2\alpha_1=0$,$\alpha_4\alpha_3\alpha_2=0$, $[4,3,2,3]$
\item $\alpha_3\alpha_2\alpha_1=0$,$\alpha_4\alpha_3\alpha_2=0$, $[5,4,3,3]$
\item $\alpha_3\alpha_2\alpha_1=0$,$\alpha_4\alpha_3=0$, $[4,3,3,2]$
\end{enumerate} 
\end{example}
Two Kupisch series define the same algebra if two of them are equivalent under cyclic permutations, therefore we can choose $k_1=1$ and $k_{2r}\leq n$ in the level of relations.

\begin{remark} By the definition \ref{inequ}, $\lambda_1(A)=r$ where $r$ is the number of relations defining the algebra $A$.
\end{remark}
We set $r=\lambda_1(A)$ for the rest of this section.
\begin{theorem}\label{thmChain}
Let $A$ be cyclic Nakayama algebra with $n$ simple modules. The global dimension of $A$ is $r+1$ if and only if the defining relations of $A$ form a chain.
\end{theorem}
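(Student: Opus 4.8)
The plan is to use the syzygy filtration machinery to track global dimension under the $\bm\varepsilon$-construction, and to show that the ``chain'' condition is exactly what forces the reduction $\gldim \bm\varepsilon(A) = \gldim A - 2$ to proceed all the way down to a linear algebra in the cheapest possible way. First I would recall from Remark \ref{remarkfacts} that when $\gldim A$ is finite there is a minimal $k$ with $\bm\varepsilon^k(A)$ linear and $\bm\varepsilon^{k-1}(A)$ cyclic, and that passing from $A$ to $\bm\varepsilon(A)$ the relevant homological dimensions drop by exactly two. Since $A$ is cyclic it has a simple of projective dimension $1$, so by Theorem \ref{Sconnectedquasihered} and Brown's inequality \ref{browninequcorollary} the condition $\gldim A = r+1$ (with $r = \lambda_1(A)$) is the extremal case; I would reformulate it as: every value in $[1, r+1]$ is attained as the projective dimension of exactly one simple module, i.e. the projective dimensions of the $n$ simples are, as a multiset, $\{1, 2, \ldots, r+1\}$ together with $n - (r+1)$ copies of values already listed — more precisely, exactly $r+1$ distinct values each with multiplicity governed by the relations. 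The key bookkeeping is that the $r$ relations correspond to the $r$ ``valley'' modules whose first syzygies constitute $\cB(A)$, and $\bm\varepsilon(A)$ has exactly $r$ simple modules.

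The core of the argument is an induction on $r$ (equivalently on $\gldim A$). For the forward direction, I would show: if $\gldim A = r+1$ then the relations of $A$ must satisfy the chain inequalities \ref{fibochaineq}, and moreover $\bm\varepsilon(A)$ either is linear (when $r$ is small, the base case $r=1$ giving $\gldim A = 2$) or is again a cyclic Nakayama algebra whose $r' = \lambda_1(\bm\varepsilon(A))$ relations form a chain with $\gldim \bm\varepsilon(A) = r'+1 = r-1$. The point is that the extremality $\gldim A = r+1$ propagates: by the ``drop by two'' property $\gldim \bm\varepsilon(A) = r-1$, and since $\bm\varepsilon(A)$ is cyclic it satisfies Brown's inequality $\gldim \bm\varepsilon(A) \le \lambda_1(\bm\varepsilon(A)) + 1 = r' + 1$, forcing $r' \geq r - 2$; one must then rule out $r' \le r-3$ and pin down $r' = r-2$, which should follow from counting how many of the $\cB(A)$-modules become projective/simple in $\bm\varepsilon(A)$ versus how the relations transform under the construction described in \cite{sen2019syz, sen2018varphi}. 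Conversely, if the relations form a chain, I would verify directly (again by induction, using the explicit description of $\cB(A)$ and of the defining relations of $\bm\varepsilon(A)$) that the chain structure is preserved by $\bm\varepsilon$, with $r$ decreasing to $r-2$, and that the linear algebra reached after $\lceil r/2 \rceil$ or so steps has the right global dimension; unwinding via $\gldim A = 2k + \gldim \bm\varepsilon^k(A)$ gives $\gldim A = r+1$.

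The main obstacle I anticipate is the precise translation of the chain inequalities \ref{fibochaineq} into the relations of $\bm\varepsilon(A)$ — that is, understanding exactly how the indices $k_1 < k_3 < \cdots < k_{2r-1}$ and $k_2 < k_4 < \cdots < k_{2r}$ of $A$'s relations determine those of $\bm\varepsilon(A)$, and checking that ``forms a chain'' is a self-reproducing condition under this map (and that non-chains are sent to algebras whose global dimension is strictly smaller than $r-1$). This requires careful use of Ringel's Appendix C description in \cite{ringel2020} identifying $\cB(A)$ with the first syzygies of the valley modules, together with the equivalence in Remark \ref{remarkfacts} between $\cB(A)$-filtered $A$-modules and $\bm\varepsilon(A)$-modules; the combinatorics of which simple $\bm\varepsilon(A)$-modules correspond to which $\Delta_j$, and how the relation-endpoints $k_{2i}$ interleave, is where the real work lies. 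A secondary subtlety is the parity handling at the bottom of the induction (whether one lands on a linear algebra directly or needs Remark \ref{remarkfacts}\,\ref{remarklis1}), but this should be routine once the inductive step is set up. I would also, along the way, keep Proposition \ref{madsenresult} in reserve to pull projective dimensions of indecomposables back to simples when needed.
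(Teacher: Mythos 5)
Your strategy is genuinely different from the paper's, and as written it has two real gaps rather than being a complete alternative proof. The first and most serious is that the entire content of the theorem is deferred: the step you describe as ``where the real work lies'' --- namely that the chain inequalities \ref{fibochaineq} are reproduced by the $\bm\varepsilon$-construction with $r\mapsto r-2$, and that a non-chain forces the global dimension to drop by strictly more than the count $\lambda_1$ allows --- is not an obstacle to be anticipated, it \emph{is} the theorem in your formulation. Without that translation the proposal is a plan, not a proof. The second gap is the appeal to Brown's inequality for $\bm\varepsilon(A)$. That inequality (Corollary \ref{browninequcorollary}) is only available for \emph{quasi-hereditary} Nakayama algebras, and it genuinely fails otherwise: the algebra $[3,4,4]$ of Example \ref{examplenak} is cyclic with $\gldim=4=\lambda_1+2$. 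By the restatement of Theorem \ref{UYtheorem} via syzygy filtrations, a cyclic $A$ is quasi-hereditary precisely when $\bm\varepsilon(A)$ is already linear; so if your induction ever meets a cyclic $\bm\varepsilon(A)$ whose own $\bm\varepsilon$ is still cyclic, that intermediate algebra is \emph{not} quasi-hereditary and the inequality $\gldim\bm\varepsilon(A)\le\lambda_1(\bm\varepsilon(A))+1$ is unjustified. In the ``only if'' direction you also cannot assume from $\gldim A=r+1$ alone that $A$ is quasi-hereditary or S-connected --- that would need an argument, and without it your reformulation of extremality (each value in $[2,r+1]$ hit exactly once) is not available. A minor symptom of the same confusion: to pin down $r'=r-2$ you must exclude $r'\ge r-1$, not $r'\le r-3$, since Brown already gives the lower bound.

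For comparison, the paper avoids the $\bm\varepsilon$-tower entirely and argues directly on a projective resolution of length $r+1$. The $r$ relations partition the indecomposable projectives into $r$ classes according to their socles $S_{k_2},\dots,S_{k_{2r}}$, and the key observation is that in a resolution realizing $\gldim A=r+1$ any two consecutive projective terms must lie in cyclically consecutive classes: skipping forward shortens the resolution, skipping backward creates periodic syzygies and hence infinite projective dimension. A resolution of length $r+1$ therefore visits each class exactly once in cyclic order, which is precisely the chain condition; conversely, for a chain one computes $\soc\Omega^{i}(M)\cong S_{k_{2i}}$ for a suitable quotient $M$ of $P_1$ and checks that $\Omega^{r+1}(M)$ is projective. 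If you want to salvage your approach, the honest route is to first prove that $\gldim A=r+1$ forces $A$ to be quasi-hereditary (so that $\bm\varepsilon(A)$ is linear and the induction collapses to a single step), and then carry out the explicit translation of the relations of $A$ into those of the linear algebra $\bm\varepsilon(A)$ using Definition \ref{filteredalg} and Remark \ref{remarkfacts}; but at that point you will essentially be redoing the socle bookkeeping that the paper's direct argument performs.
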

\begin{proof}
The only if part is obvious. Briefly, we assume that the relations form a chain. Let $M$ be a subquotient of the projective module $P_1$. We have
$\soc\Omega^{i}(M)\cong S_{k_{2i}}$ if $i\leq r$ and $\soc\Omega^{r+1}(M)\cong S_{k_2}$. Moreover $\Omega^{r+1}(M)$ is a projective module because $P_1$ is always submodule of it by the chain condition. Hence $\gldim M=r+1$.

For the other part, we start with $\gldim(A)=\lambda_1(A)+1=r+1$. This implies that the number of relations is $r$. Because the global dimension is $r+1$, there exists a module $M$ with the following projective resolution:
\begin{align}
0\longrightarrow P_{r+1}\longrightarrow\ldots \longrightarrow P_2\longrightarrow P_1\longrightarrow M \longrightarrow 0.
\end{align}
Here the subscripts correspond {\textbf{not}} to simple modules but just keep track of the position of the projective module in the resolution. The key observation is: two consecutive projective modules in the resolution have to come from two consecutive classes of projective modules. Otherwise: either we get a shorter projective dimension or infinite global dimension. In details: because the number of relations is $r$, the number of the classes of the projective modules is $r$. Without loss of generality, let the first relation start with one i.e. $M$ is a quotient of the projective module $P(S_1)$. Assume that
$\soc P_i\cong S_{k_{2j}}$ and $\soc P_{i+1}\cong S_{k_{2t}}\ncong S_{k_{2j+2}}$, then we have:
\begin{itemize}
\item either $j<t$ which makes projective dimension smaller than $r+1$ 
\item or $t<j$ which makes projective dimension infinite (because we get periodic syzygies).
\end{itemize}
Therefore from each the classes of projective modules we pick exactly one projective module in the cyclic ordering for the resolution. This only happens when the relations form the chain i.e. the indices of the socles of the syzygies are consecutive in the view of cyclic ordering.
\end{proof}

Now, we see that the number of algebras satisfying $\gldim(A)=r+1$ is related to one of the most famous integer sequence. Recall that \emph{Fibonacci numbers} $F_n$ are defined recursively by $F_0=0, F_1=1$ and $F_{n+1}=F_n+F_{n-1}$ for $n \geq 1$.
\begin{theorem}\label{thmfiboeven}
The number of cyclic Nakayama algebras with $n$ simple modules and global dimension $r+1$ is given by even indexed Fibonacci numbers.
\end{theorem}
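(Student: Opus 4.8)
The plan is to reduce the problem, via Theorem \ref{thmChain}, to a purely combinatorial counting problem: the number of cyclic Nakayama algebras with $n$ simple modules and global dimension $r+1$ equals the number of ``chains'' of relations, i.e. irredundant systems $\alpha_{k_{2i}}\cdots\alpha_{k_{2i-1}}=0$ ($1\leq i\leq r$) on the cyclic quiver with $n$ vertices satisfying the chain inequality \eqref{fibochaineq} together with the two cyclic orderings, and where (using the cyclic permutation freedom) we normalise by $k_1=1$ and $k_{2r}\leq n$. So the first step is to state this reduction cleanly and fix the normalisation, noting that distinct normalised chains give non-isomorphic algebras. I would then let $f(n)$ denote the total count (summed over all admissible $r$) and aim to show $f(n)=F_{2n}$, either by exhibiting a bijection with a known family counted by $F_{2n}$ or, more robustly, by establishing a recurrence that $F_{2n}$ satisfies, namely $F_{2n}=3F_{2n-2}-F_{2n-4}$ (equivalently the Fibonacci recursion applied twice), together with the small base cases $f(1)=F_2=1$ and $f(2)=F_4=3$, the latter checked against Example \ref{examplefibo1} (which lists $4$ chains for $n=4$ with $r=2$; one must also add the $r=1$ chains for $n=4$, so the bookkeeping of the base cases deserves care).

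The combinatorial core is to understand the structure of a chain. Reading \eqref{fibochaineq} as $k_{2i-1}\leq k_{2i-2}<k_{2i+1}$, the relations' ``start points'' $k_1<k_3<\cdots<k_{2r-1}$ and ``end points'' $k_2<k_4<\cdots<k_{2r}$ interleave so that the relations overlap consecutively like shingles: each relation begins at or before the end of the previous one and ends strictly after. I would encode a normalised chain by the sequence of pairs $(k_{2i-1},k_{2i})$ with $1=k_1\leq k_2$, $k_{2i}\leq n$, and the interleaving constraints; this is equivalent to choosing two strictly increasing sequences in $\{1,\dots,n\}$ that are ``linked'' in the prescribed way. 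A clean way to count such configurations is to set up a transfer-matrix / step-by-step recursion: process the vertices $1,2,\dots,n$ left to right, tracking at each vertex whether it is currently ``covered'' by exactly one or by two of the overlapping relations (these are the only states, since three relations cannot mutually overlap at a point in a chain by the strict inequality $k_{2i-2}<k_{2i+1}$). The allowed transitions (start a new relation, end a relation, or neither, subject to the shingling) then give a fixed $2\times 2$ (or slightly larger, to handle the two endpoints/the cyclic normalisation) integer transfer matrix whose $n$-th power entry is $f(n)$; its characteristic polynomial should be exactly $x^2-3x+1$, whose powers generate the even-indexed Fibonacci numbers.

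I expect the main obstacle to be two-fold. First, handling the boundary of the cyclic quiver correctly: because we have normalised with $k_1=1$ and cut the cycle, the first and last relations are constrained differently from the interior ones (the chain inequality \eqref{fibochaineq} is only required for $i\in[1,r-1]$, and the ``wrap-around'' socle $S_{k_2}$ appearing in the proof of Theorem \ref{thmChain} interacts with the normalisation), so the transfer-matrix argument needs a careful correction at the two ends rather than a naive $n$-th power; getting the off-by-one/off-by-index right so that the answer lands on $F_{2n}$ rather than $F_{2n\pm1}$ or $F_{2n}$ times a constant is where most of the effort goes. Second, verifying that the combinatorial recursion genuinely matches $F_{2n}=3F_{2n-2}-F_{2n-4}$ and pinning the base cases $f(1)$ and $f(2)$; once the recursion and two consecutive base values agree, induction closes the argument. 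A sanity check throughout is to recompute $f(3)=F_6=8$ and $f(4)=F_8=21$ directly from the chain description and confirm agreement.
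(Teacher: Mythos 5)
Your reduction is exactly the paper's: Theorem \ref{thmChain} converts the problem into counting chains, normalised by $k_1=1$ and $k_{2r}\leq n$. From there you diverge. The paper counts the chains with a \emph{fixed} number $r$ of relations by directly enumerating the integer solutions of the inequalities \eqref{fibochaineq}, obtaining $E(n,r)=\sum_{i=1}^{r}\binom{n-1}{2r-i}\binom{r-1}{i-1}=\binom{n+r-2}{2r-1}$, and then checks by explicit binomial manipulation that $\sum_{r}E(n,r)$ satisfies $a_{n+1}=3a_{n}-a_{n-1}$ with the correct initial values. You propose instead a transfer-matrix scan of the vertices whose characteristic polynomial ``should be'' $x^2-3x+1$. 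That is a plausible alternative and, if carried out, would yield the recursion without any binomial identities; but as written it is only a plan: the matrix is never exhibited, the admissible transitions are not matched against the actual constraints (the inequality \eqref{fibochaineq} is imposed only for $i\in[1,r-1]$, the normalisation $k_1=1$, $k_{2r}\leq n$ breaks the cyclic symmetry, and irredundancy of the relation system must be preserved), and the boundary corrections you yourself flag are precisely where the work lies. So the central counting step of the theorem is missing from the proposal.

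There is also a concrete indexing error. Having defined $f(n)$ as the count for $n$ simple modules, the correct values are $f(2)=1=F_2$ (only the algebra $[3,2]$), $f(3)=3=F_4$ and $f(4)=8=F_6$, i.e.\ $f(n)=F_{2n-2}$; the introduction's $F_{2n}$ refers to $n+1$ simple modules. Your base cases $f(1)=F_2$, $f(2)=F_4$ and sanity checks $f(3)=F_6=8$, $f(4)=F_8=21$ mix the two conventions --- in particular $f(4)=21$ is false, since the paper's own example computes $3+4+1=8$ algebras with four simple modules, and checking $f(2)=3$ against Example \ref{examplefibo1} (which concerns $n=4$) does not make sense. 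An induction built on these base values would prove the wrong statement, so the off-by-one you mention in passing is not cosmetic but must be resolved before the argument can close.
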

\begin{proof}
By the theorem above, it is enough to count the all chains for a given $n$. By \cite{sen2018varphi}, $1\leq r\leq n-1$ if the algebra is not selfinjective. We define the function $E(n,r)$ which returns the nonisomorphic algebras with $n$ simple modules and number of relations $r$. The total number will be the sum of $E(n,r)$ over all possible $r$. By the propositions \ref{fiboprop1} and \ref{fibopro2} below, the claim follows. 
\end{proof}
 For instance, in the example \ref{examplefibo1}, $E(4,2)=4$.
\begin{proposition}\label{fiboprop1} 
$E(n,r)=\sum\limits^r_{i=1}{n-1\choose 2r-i}{r-1\choose i-1}$
\end{proposition}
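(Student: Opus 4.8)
The plan is to count chains directly by a bijection with certain pairs of subsets of $\{1,\dots,n-1\}$. Recall that, after normalising $k_1=1$ and $k_{2r}\le n$, a chain is determined by the two cyclically ordered sequences of terminal points $k_1<k_3<\cdots<k_{2r-1}$ (the ``starts'') and $k_2<k_4<\cdots<k_{2r}$ (the ``ends''), subject to the interlacing condition \eqref{fibochaineq}, namely $k_{2i-1}\le k_{2i-2}<k_{2i+1}$ for $i\in[1,r-1]$. First I would set $x_i:=k_{2i-1}$ for $i=1,\dots,r$ and $y_i:=k_{2i}$ for $i=1,\dots,r$, so that $x_1=1<x_2<\cdots<x_r$ and $y_1<y_2<\cdots<y_r\le n$, and the chain condition becomes $x_{i+1}\le y_i < x_{i+2}$ read correctly against the indices (plus the single relation constraint $x_i\le y_i$ forced by admissibility of each relation). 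The union $\{x_2,\dots,x_r\}\cup\{y_1,\dots,y_r\}$ then turns out to be a set of $2r-1$ distinct elements of $\{2,\dots,n\}$ once one checks, using the interlacing, that no $x$ coincides with a $y$; conversely, a size-$(2r-1)$ subset together with a choice of which of its elements are the $x$'s (there must be exactly $i-1$ of them among the $r+i-2$ smallest, for the pattern to close up) reconstructs the chain.

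The key combinatorial step is therefore: for a fixed ``shape'' parameter $i$ — which I would take to be the number of indices $j$ with some prescribed local behaviour of the interlacing (equivalently, a Catalan-type statistic recording how the two increasing sequences are shuffled) — the number of chains is $\binom{n-1}{2r-i}\binom{r-1}{i-1}$: the first binomial chooses the underlying $2r-i$ positions in $\{1,\dots,n-1\}$ that are ``used'' (after the shift by one coming from the normalisation $k_1=1$, the relevant ground set has size $n-1$), and the second binomial $\binom{r-1}{i-1}$ records how the remaining $r$ starts and $r$ ends are distributed among those positions compatibly with $x_1=1$ being fixed and the interlacing forcing exactly $i-1$ free choices. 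Summing over $i$ from $1$ to $r$ gives $E(n,r)=\sum_{i=1}^r\binom{n-1}{2r-i}\binom{r-1}{i-1}$. I would prove the per-shape count by an explicit bijection: split a chain at the $r-1$ interlacing inequalities into ``blocks'', and observe that the data of a chain is equivalent to (a) the multiset of block lengths, which is a weak composition encoded by the first binomial, and (b) for each of the $r-1$ internal junctions a binary choice of whether $k_{2i-1}=k_{2i-2}$ or $k_{2i-1}<k_{2i-2}$ is ``tight'', with the count of tight junctions being $i-1$, encoded by the second binomial.

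The main obstacle I expect is getting the index bookkeeping exactly right: the normalisation $k_1=1$, the cyclic ordering, the admissibility constraints $c_i\ge 2$ and $k_{2i-1}\le k_{2i}$, and the off-by-one between the ground set $\{1,\dots,n-1\}$ (arrows) and $\{1,\dots,n\}$ (vertices) all interact, and it is easy to produce a formula that is off by one in $n$ or $r$ or that double-counts the shuffles. To guard against this I would verify the formula against Example~\ref{examplefibo1}, where $n=4$, $r=2$ should give $E(4,2)=\binom{3}{3}\binom{1}{0}+\binom{3}{2}\binom{1}{1}=1+3=4$, matching the four chains listed there, and also check the edge cases $r=1$ (where $E(n,1)=\binom{n-1}{1}=n-1$, the $n-1$ non-selfinjective Nakayama algebras with a single relation, up to cyclic shift) and $r=n-1$. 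Once the per-shape bijection is pinned down and these sanity checks pass, the proposition follows immediately by summation; the identification of $\sum_r E(n,r)$ with $F_{2n}$ is then deferred to Proposition~\ref{fibopro2}.
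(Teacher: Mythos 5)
Your proposal is correct and is essentially the paper's own argument: both stratify the chains by the number $i-1$ of interlacing inequalities $k_{2i-1}\le k_{2i-2}$ that are equalities, with $\binom{r-1}{i-1}$ choosing which junctions are tight and $\binom{n-1}{2r-i}$ choosing the remaining distinct values once $k_1=1$ is normalised. The verification against Example \ref{examplefibo1} and the edge case $r=1$ is a sensible addition, but the underlying counting is the same.
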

\begin{proof}
For an arbitrary $r$, we need to count all integer solutions to the equations \ref{fibochaineq} i.e. satisfying the chain condition. This leads to a well known enumerative combinatorics problem. The number of solutions is ${n-1\choose 2r-1}$ if all inequalities are strict. Then there can be all strict inequalities but one, which leads to ${n-1\choose 2r-2}{r-1\choose 1}$. By similar counting arguments, the general term is ${n-1\choose 2r-i}{r-1\choose i-1}$, which gives the desired result.
\end{proof}
\begin{corollary}\label{fibocor1}
$E(n,r)={n+r-2\choose 2r-1}$
\end{corollary}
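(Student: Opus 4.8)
The plan is to recognise the sum appearing in Proposition \ref{fiboprop1} as an instance of the Chu--Vandermonde identity. First I would reindex the sum by setting $j=i-1$, so that $j$ runs from $0$ to $r-1$, obtaining
\[
E(n,r)=\sum_{i=1}^{r}\binom{n-1}{2r-i}\binom{r-1}{i-1}=\sum_{j=0}^{r-1}\binom{n-1}{(2r-1)-j}\binom{r-1}{j}.
\]

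Next I would note that this last sum equals $\sum_{j\in\Z}\binom{n-1}{(2r-1)-j}\binom{r-1}{j}$, since the extra terms contribute nothing: for $j<0$ or $j>r-1$ we have $\binom{r-1}{j}=0$, and when $(2r-1)-j$ lies outside $[0,n-1]$ the first factor vanishes. Now the Chu--Vandermonde identity $\sum_{j}\binom{m}{k-j}\binom{p}{j}=\binom{m+p}{k}$, applied with $m=n-1$, $p=r-1$ and $k=2r-1$, yields
\[
E(n,r)=\binom{(n-1)+(r-1)}{2r-1}=\binom{n+r-2}{2r-1},
\]
which is the asserted closed form.

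I do not expect any genuine obstacle here: the only point needing a little care is the bookkeeping of the summation range, which is dealt with by the standard convention that binomial coefficients with out-of-range lower argument are zero. If one prefers to avoid quoting Vandermonde, the same identity admits a direct combinatorial reading: $\binom{n+r-2}{2r-1}$ counts the $(2r-1)$-subsets of an $(n+r-2)$-element set split as a block of size $n-1$ together with a block of size $r-1$, and classifying a subset by how many of its elements lie in the smaller block reproduces the left-hand side term by term. Either argument closes the proof in a couple of lines.
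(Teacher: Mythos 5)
Your argument is correct: the reindexing $j=i-1$ turns the sum from Proposition \ref{fiboprop1} into the convolution $\sum_{j}\binom{n-1}{(2r-1)-j}\binom{r-1}{j}$, and Chu--Vandermonde with $m=n-1$, $p=r-1$, $k=2r-1$ gives $\binom{n+r-2}{2r-1}$ immediately; the remark about extending the summation range costs nothing since the out-of-range binomial coefficients vanish. The paper instead sketches an induction using Pascal's rule $\binom{n}{a}+\binom{n}{a+1}=\binom{n+1}{a+1}$, which is really the standard inductive proof of the very identity you quote, so the two arguments rest on the same underlying fact. What your version buys is that it is a closed, one-step derivation with the bookkeeping made explicit (and a combinatorial reading as a bonus), whereas the paper's one-line proof leaves the induction to the reader; there is no substantive mathematical difference between the two routes.
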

\begin{proof}
This follows using the formula ${n\choose a}+{n\choose a+1}={n+1\choose a+1}$ and induction.
\end{proof}

\begin{proposition}\label{fibopro2}
$F_{2n-2}=\sum\limits^{n-1}_{r=1} E(n,r)$.
\end{proposition}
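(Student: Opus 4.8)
The plan is to establish the identity $F_{2n-2}=\sum_{r=1}^{n-1}E(n,r)$ by combining the closed form $E(n,r)=\binom{n+r-2}{2r-1}$ from Corollary \ref{fibocor1} with a known binomial-sum representation of the Fibonacci numbers. Concretely, substituting $m=n-1$, the right-hand side becomes $\sum_{r=1}^{m}\binom{m+r-1}{2r-1}$, and the claim reduces to showing $F_{2m}=\sum_{r=1}^{m}\binom{m+r-1}{2r-1}$.

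First I would recall the standard identity expressing Fibonacci numbers as a sum of binomial coefficients along a diagonal of Pascal's triangle, namely $F_{k+1}=\sum_{j\geq 0}\binom{k-j}{j}$. Applying this with $k=2m-1$ gives $F_{2m}=\sum_{j\geq 0}\binom{2m-1-j}{j}$. The main step is then a purely combinatorial reindexing: setting $j$ in correspondence with $r$ via $j=m-r$ (so that the "upper minus twice lower" quantity $2m-1-j-j=2m-1-2j$ matches $2r-1$ after the substitution, and $\binom{2m-1-j}{j}=\binom{m+r-1}{2(m-j)-1}$ requires the symmetry $\binom{a}{b}=\binom{a}{a-b}$ with $a=2m-1-j$ and $b=j$, giving $\binom{2m-1-j}{2m-1-2j}=\binom{m+r-1}{2r-1}$). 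One checks the range of summation matches: $j$ runs over those values with $0\le j\le 2m-1-j$, i.e. $0\le j\le m-1$ (plus possibly $j=m$ giving a vanishing or boundary term that must be accounted for), which corresponds to $r=m-j$ running from $1$ to $m$.

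Alternatively, and perhaps more cleanly, I would give a direct induction on $n$ (equivalently on $m$): verify the base case $n=2$ (where $E(2,1)=\binom{1}{1}=1=F_2$), and for the inductive step use Pascal's rule $\binom{n+r-2}{2r-1}=\binom{n+r-3}{2r-1}+\binom{n+r-3}{2r-2}$ to split $\sum_{r}E(n,r)$ into $\sum_r E(n-1,r)$ plus a second sum which, after reindexing, should telescope to $\sum_r E(n-1,r)$ shifted, and invoke the Fibonacci recursion $F_{2n-2}=F_{2n-3}+F_{2n-4}=2F_{2n-4}+F_{2n-5}$; matching the two sums to $F_{2n-4}$ and $F_{2n-3}$ respectively closes the induction. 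This requires being slightly careful with the index shifts in the binomial coefficients and with the edge terms $r=1$ and $r=n-1$.

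The main obstacle I anticipate is bookkeeping rather than conceptual: ensuring the ranges of summation align correctly at the endpoints (terms like $\binom{m+r-1}{2r-1}$ vanish automatically when $2r-1>m+r-1$, i.e. $r>m$, which is convenient, but one must confirm no spurious nonzero terms are dropped or added when translating between the $\sum_{j\ge 0}$ form and the $\sum_{r=1}^{m}$ form) and handling the $r=0$ or $j=m$ boundary correctly. Once the closed form $E(n,r)=\binom{n+r-2}{2r-1}$ is in hand from Corollary \ref{fibocor1}, everything else is a standard manipulation of binomial coefficients, so I would present the diagonal-sum identity route as the cleanest, citing the well-known formula $F_{k+1}=\sum_{j\ge 0}\binom{k-j}{j}$.
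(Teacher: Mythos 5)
Your proposal is correct, but it takes a genuinely different route from the paper. The paper proves the identity by setting $a_n=F_{2n-2}$, invoking the second-order recursion $a_{n+2}=3a_{n+1}-a_n$ satisfied by the even-indexed Fibonacci numbers, and then verifying $3a_{n+1}-a_n=a_{n+2}$ by a direct (and somewhat delicate) manipulation of the binomial sums $\sum_r\binom{n+r-2}{2r-1}$, with base cases $n=2,3$ checked by hand. You instead reduce everything to the classical shallow-diagonal identity $F_{k+1}=\sum_{j\ge 0}\binom{k-j}{j}$: with $m=n-1$ and $k=2m-1$, the substitution $r=m-j$ together with the symmetry $\binom{a}{b}=\binom{a}{a-b}$ turns $\binom{2m-1-j}{j}$ into $\binom{m+r-1}{2r-1}=E(n,r)$, and the ranges match exactly ($j$ from $0$ to $m-1$ corresponds to $r$ from $m$ down to $1$, the terms with $j\ge m$ vanishing automatically). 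Your reindexing checks out on small cases ($n=3$ gives $\binom{2}{1}+\binom{3}{3}=3=F_4$; $n=4$ gives $3+4+1=8=F_6$), so the argument is complete once the diagonal-sum identity is cited. What your approach buys is brevity and transparency — no recursion bookkeeping, no separate base cases beyond what the cited identity already encodes; what the paper's approach buys is self-containedness modulo only the recursion $a_{n+1}=3a_n-a_{n-1}$, at the cost of a longer telescoping computation. Your secondary suggestion (induction via Pascal's rule) is left too sketchy to assess, but your primary route stands on its own.
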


\begin{proof}
We will prove it by induction. We set $F_{2n-2}=a_n$ and then the even Fibonacci sequence satisfies the recursion $a_{n+1}=3a_{n}-a_{n-1}$ and the first few elements are $0,1,3,8,21$, see for example \cite{OEIS}.
If $n=2$ then there can be only one relation, hence $a_2=E(2,1)=1$, which is the algebra $[3,2]$. If $n=3$, there are three possibilities either two distinct one relation cases $[4,3,2]$, $[5,4,3]$ or two relations $[3,2,2]$. Therefore $E(3,2)=1$, $E(3,1)=2$ which gives $3$.

We will calculate the recursion.
\begin{align}
a_n=\sum\limits^{n-1}_{r=1} {n+r-2\choose 2r-1}={n-1\choose 1}+{n\choose 3}+{n+1\choose 5}+\cdots+ {2n-3\choose 2n-3}&\\
a_{n+1}=\sum\limits^{n}_{r=1} {n+r-1\choose 2r-1}={n\choose 1}+{n+1\choose 3}+{n+2\choose 5}+\cdots+ {2n-2\choose 2n-3}&+{2n-1\choose 2n-1}
\end{align}
Their difference is:
\begin{align}\label{differenceEvenFibo}
a_{n+1}-a_{n}=&{n-1\choose 0}+{n\choose 2}+{n+1\choose 4}+\cdots+ {2n-3\choose 2n-4}+{2n-1\choose 2n-1}\\
2a_{n+1}-a_{n}=&{n-1\choose 0}+{n+1\choose 2}+{n+2\choose 4}+\cdots+ {2n-2\choose 2n-4}+{2n-2\choose 2n-3}+2{2n-1\choose 2n-1}\\
3a_{n+1}-a_{n}=&{n+1\choose 1}+{n+2\choose 3}+{n+3\choose 5}+\cdots+ {2n-1\choose 2n-3}+{2n-2\choose 2n-3}++3{2n-1\choose 2n-1}
\end{align}
The last two summands give:
\begin{align}
{2n-2\choose 2n-3}+3{2n-1\choose 2n-1}=2n-2+3=2n+1={2n\choose 2n-1}+{2n+1\choose 2n+1}
\end{align}
therefore:
\begin{align}
3a_{n+1}-a_{n}={n+1\choose 1}+{n+2\choose 3}+{n+3\choose 5}+\cdots+ {2n-1\choose 2n-3}+{2n\choose 2n-1}+{2n+1\choose 2n+1}=a_{n+2}.
\end{align}
This finishes proof. 
\end{proof}

\begin{example} If $n=3$, number of algebras $2+1$ follows from:
\begin{itemize}
\item $r=1$, $[4,3,2], [5,4,3]$ relations $\alpha_2\alpha_1$, $\alpha_3\alpha_2\alpha_1$
\item $r=2$ $[3,2,2]$ relations $\alpha_2\alpha_1, \alpha_3\alpha_2$
\end{itemize}
If $n=4$, we have $3+4+1$ via:
\begin{itemize}
\item $r=1$, $[5,4,3,2]$, $[6,5,4,3]$, $[7,6,5,4]$, relations are $\alpha_2\alpha_1$, $\alpha_3\alpha_2\alpha_1$, $\alpha_4\alpha_3\alpha_2\alpha_1$
\item $r=2$, see example \ref{examplefibo1}.
\item $r=3$, relations are $\alpha_2\alpha_1,\alpha_3\alpha_2,\alpha_4\alpha_3$, so Kupisch series is $[3,2,2,2]$
\end{itemize}
\end{example}

\section{Enumeration of Linear Nakayama algebras}\label{secLinear}
Let $A$ be a connected linear Nakayama algebra with $n$ simple modules. We can still describe the algebra in terms of relations similar to \ref{relations}, with one exception because there exists a simple module which is also projective. Because of this we always inculde the relation $\alpha_n=0$, which minimally produces the simple module $S_n$.

We fix the following set up. Let $I$ be an admissible ideal generated by the relations:
\begin{gather}\label{relations}
\alpha_{k_2}\ldots\alpha_{k_1+1}\alpha_{k_1}\ \ =0 \\
\alpha_{k_4}\ldots\alpha_{k_3+1}\alpha_{k_3}\ \ =0\nonumber \\
\vdots \nonumber\\
\alpha_{k_{2r-2}}\ldots\alpha_{k_{2r-3}+1}\alpha_{k_{2r-3}}=0\nonumber\\
\alpha_{k_{2r}}\ldots\alpha_{k_{2r-1}+1}\alpha_{k_{2r-1}}=0\nonumber\\
\alpha_{n}=0
\end{gather}
where $1\leq k_1<k_3<\ldots<k_{2r-1}<k_{2r}<n$ for the linear quiver . Notice that $n$ has to be greater than $k_{2r}$ to make the relations irredundant. Note that we describe the simple projective module $S_n$ via the relation $\alpha_n$ to keep the construction consistent to  \ref{relations}.
\begin{lemma} If $A$ is connected linear Nakayama algebra, then $\lambda_1(A)$ gives the number of relations which define the algebra minimally.
\end{lemma}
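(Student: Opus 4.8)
The plan is to relate the minimal defining relations of a connected linear Nakayama algebra $A$, as displayed in \eqref{relations}, to the simple modules of projective dimension $\neq 1$, i.e. to $\lambda_1(A)=|\{i\mid \pdim S_i\neq 1\}|$. By Corollary~\ref{linear} we have $\mathcal{O}_A=[0,g]$ where $g=\gldim A$, so in particular there is exactly one simple projective module, namely $S_n$ (forced by the relation $\alpha_n=0$), and this simple contributes $1$ to $\lambda_1(A)$. The remaining relations $\alpha_{k_{2i}}\cdots\alpha_{k_{2i-1}}=0$ for $1\le i\le r$ then need to be matched with the $r$ remaining simple modules of projective dimension $\neq 1$; concretely I expect to show that $\pdim S_j\neq 1$ precisely when $j$ is one of $n$ or the starting vertices $k_1,k_3,\ldots,k_{2r-1}$ of the nontrivial relations, so that $\lambda_1(A)=r+1$ equals the total number of relations in the minimal system \eqref{relations}.

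First I would recall the standard description of syzygies of simple modules over a Nakayama algebra: $\Omega^1(S_j)=\rad P_j$, which is again uniserial, with top $S_{j+1}$ and socle determined by the relation structure; a simple $S_j$ has $\pdim S_j=1$ exactly when $\rad P_j$ is projective, and $\pdim S_j=0$ exactly when $P_j=S_j$ is simple (only $j=n$). Thus a simple $S_j$ with $j\neq n$ has $\pdim S_j\neq 1$ if and only if $\rad P_j$ is a non-projective, non-zero uniserial module. The key combinatorial point is that in the irredundant system \eqref{relations} the relation $\alpha_{k_{2i}}\cdots\alpha_{k_{2i-1}}=0$ is responsible for ``cutting'' the projective $P_{k_{2i-1}}$ so that $\rad P_{k_{2i-1}}$ fails to be projective, whereas for a vertex $j$ that is not a starting point of any relation the projective $\rad P_j$ is again of the same ``shape'' as $P_{j+1}$ up to length, hence projective; this is exactly what makes the system irredundant. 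So I would argue: (i) for each $i$, $\pdim S_{k_{2i-1}}\neq 1$; (ii) for $j\notin\{n,k_1,k_3,\ldots,k_{2r-1}\}$, $\pdim S_j=1$; (iii) $\pdim S_n=0\neq 1$. Counting gives $\lambda_1(A)=r+1$, which is the number of relations in \eqref{relations}, as claimed.

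For step (i) the cleanest route is probably: if $j=k_{2i-1}$ is a relation-start, then $P_j$ has length exactly $k_{2i}-k_{2i-1}+1$ (the relation $\alpha_{k_{2i}}\cdots\alpha_{k_{2i-1}}$ is the shortest path killed starting at $j$), so $\rad P_j$ has length $k_{2i}-k_{2i-1}$ with top $S_{j+1}$ and socle $S_{k_{2i}}$; this equals $P_{j+1}$ only if $\dim P_{j+1}=k_{2i}-k_{2i-1}$, and irredundancy of the system (no relation starts at $j+1$ that would shorten $P_{j+1}$ to exactly this length, together with the cyclic/linear ordering $k_{2i-3}<k_{2i-1}$) forces $\dim P_{j+1}>k_{2i}-k_{2i-1}$ or $\rad P_j$ to have a socle incompatible with being projective — so $\rad P_j$ is not projective and $\pdim S_j\ge 2$. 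For step (ii), if $j\notin\{n\}\cup\{k_{2i-1}\}$ then no relation starts at $j$, so $P_j$ has length $c_j=c_{j+1}+1$ (the Kupisch condition forced by the structure of a linear Nakayama algebra away from relation starts), whence $\rad P_j\cong P_{j+1}$ is projective and $\pdim S_j=1$. The main obstacle I anticipate is handling the bookkeeping of Kupisch-series lengths versus relation data carefully enough to be sure that ``$j$ is not a relation-start'' really does force $\rad P_j\cong P_{j+1}$ — i.e. pinning down precisely how the irredundancy hypothesis on \eqref{relations} translates into the statement $c_j=c_{j+1}+1$ for non-relation-start vertices $j<n$ and $c_{k_{2i-1}}<c_{k_{2i-1}+1}+1$ at relation starts. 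Once that dictionary between the minimal relation system and the Kupisch series is set up cleanly, the count $\lambda_1(A)=r+1=\#\{\text{relations}\}$ is immediate.
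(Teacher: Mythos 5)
Your proposal is correct and follows essentially the same route as the paper: the paper's (much terser) proof simply asserts that the simples of projective dimension $\neq 1$ are exactly $S_{k_1},S_{k_3},\ldots,S_{k_{2r-1}}$ and $S_n$, i.e. one for each relation in the minimal system including $\alpha_n=0$. Your elaboration via $\Omega^1(S_j)=\rad P_j$ and the dictionary between relation-starts and the failure of $\rad P_j\cong P_{j+1}$ is exactly the justification the paper leaves implicit.
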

\begin{proof}
All simple modules except $S_{k_1},S_{k_3},\ldots,S_{k_{2r-1}},S_n$ are of projective dimension one. Their number is equal to the number of relations.
\end{proof}

\begin{definition}\label{linearChain} We say that the defining relations of a {\bf linear} Nakayama algebra form a chain if they satisfy:
\begin{align}\label{linearEquations}
k_{2i-1}\leq k_{2i-2}<k_{2i+1}
\end{align}
for all $i\in [1,r-1]$, together with:
\begin{align}
1\leq k_1<k_3<\cdots< k_{2r-1}\\
k_2<k_4<\cdots < k_{2r}<n.
\end{align}
\end{definition}

\begin{proposition} Let $A$ be a connected linear Nakayama algebra with $n$ simple modules. The global dimension of $A$ is equal to $r$ if and only if the relations form a chain as in definition \ref{linearChain}.
\end{proposition}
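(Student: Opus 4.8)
The plan is to mirror the proof of Theorem~\ref{thmChain} for cyclic algebras, carefully tracking the effect of the extra relation $\alpha_n=0$ which is forced by the presence of the simple projective module $S_n$. Since $A$ is linear it is quasi-hereditary, hence S-connected by Corollary~\ref{linear}, and $\mathcal{O}_A=[0,g]$ where $g=\gldim A$; by Corollary~\ref{browninequcorollary} we already know $g\le \lambda_1(A)=r$, so the content of the statement is to characterise when equality holds, exactly as in the cyclic case.

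First I would prove the ``if'' direction. Assume the relations form a chain in the sense of Definition~\ref{linearChain}. As in the proof of Theorem~\ref{thmChain}, take $M$ to be a suitable subquotient of $P_1$ (the projective whose top is $S_{k_1}$, where after a shift we may take $k_1=1$) realising the valley module, and compute its minimal projective resolution. The chain inequalities $k_{2i-1}\le k_{2i-2}<k_{2i+1}$ guarantee that $\soc\Omega^{i}(M)\cong S_{k_{2i}}$ for $1\le i\le r-1$ and that each successive syzygy is again a subquotient of the next projective in the cyclic list, so the resolution does not terminate early; the point where the linear case differs is at the last step, where instead of looping back to a projective containing $P_1$ as a submodule (which is what happens in the cyclic case), the final syzygy $\Omega^{r}(M)$ is the simple projective module $S_n$ — this is exactly what the relation $\alpha_n=0$ and the condition $k_{2r}<n$ ensure. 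Hence $\pdim M=r$ and $\gldim A\ge r$, which combined with Brown's inequality gives $\gldim A=r$.

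For the ``only if'' direction I would argue contrapositively, again following the template of Theorem~\ref{thmChain}: assume $\gldim A=r$ and pick a module $M$ with $\pdim M=r$, so there is a minimal projective resolution $0\to P_r\to\cdots\to P_1\to M\to 0$ of length $r$. Since there are $r+1$ classes of projective modules indexed by the socle types $S_{k_2},S_{k_4},\ldots,S_{k_{2r}},S_n$ (the last coming from $\alpha_n=0$), and the resolution has length $r$ hence $r+1$ projective terms, the ``consecutive classes'' argument of Theorem~\ref{thmChain} applies verbatim: if two consecutive syzygies had socles $S_{k_{2j}}$ and $S_{k_{2t}}$ with $t<j$ we would get periodic syzygies and infinite global dimension, a contradiction since $A$ is linear; and if $j<t$ non-consecutively we would get a resolution shorter than $r$. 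Therefore the socles of the syzygies run through all $r+1$ classes in cyclic order, with the terminal one being $S_n$, and this forces precisely the inequalities of Definition~\ref{linearChain}.

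The main obstacle I anticipate is the bookkeeping at the two ``boundary'' steps of the resolution: the very first step (choosing $M$ as the right subquotient of $P_1$ and verifying $\soc\Omega^1(M)\cong S_{k_2}$) and especially the very last step, where one must check that the chain conditions together with $k_{2r}<n$ force $\Omega^r(M)\cong S_n$ rather than some longer module, and conversely that the presence of the projective-simple $S_n$ as a final syzygy is what pins down the last inequality $k_{2r-1}\le k_{2r-2}$ and the bound $k_{2r}<n$. Everything in the interior is a direct transcription of the cyclic argument, so I would state the interior steps briefly and spend most of the written proof on these endpoint verifications.
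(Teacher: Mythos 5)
Your proposal follows essentially the same route as the paper: both directions transcribe the argument of Theorem~\ref{thmChain}, with the ``if'' part computing $\soc\Omega^{i}(M)\cong S_{k_{2i}}$ for a subquotient $M$ of $P_1$ and stopping after $r$ steps, and the ``only if'' part using the consecutive-socle-classes pigeonhole to force the chain condition. The only (harmless) deviation is that you assert $\Omega^{r}(M)\cong S_n$, whereas the paper only claims $\Omega^{r}(M)$ is a projective module having $S_n$ as a submodule --- the weaker statement suffices and is what the chain condition actually delivers.
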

\begin{proof}
We will follow similar arguments as in the proof of theorem  \ref{thmChain}. 
The only difference is the existence of the simple projetive module $S_n$.

First we handle the only if part. Assume that the relations form a chain. Let $M$ be a subquotient of the projective module $P_1$. We have
$\soc\Omega^{i}(M)\cong S_{k_{2i}}$ if $i\leq r$.  Moreover $\Omega^{r}(M)$ is a projective module because $S_n$ is always a submodule of it by the chain condition. Hence $\gldim M=r$.

For the other part, we start with $\gldim(A)=\lambda_1(A)=r$. This implies that the number of relations is $r$. Because the global dimension is $r$, there exists a module $M$ with the following projective resolution:
\begin{align}
0\longrightarrow P_{r}\longrightarrow\ldots \longrightarrow P_2\longrightarrow P_1\longrightarrow M \longrightarrow 0.
\end{align}
Here the subscripts correspond {\textbf{not}} to simple modules but just keep track of the position of the projective module in the resolution. The key observation is: two consecutive projective modules in the resolution have to come from two consecutive classes of projective modules. Otherwise we get a shorter projective dimension. In details: because the number of relations is $r$, the number of the classes of the projective modules is $r$. Let $M$ be a quotient of the projective module $P(S_1)$. Assume that
$\soc P_i\cong S_{k_{2j}}$ and $\soc P_{i+1}\cong S_{k_{2t}}\ncong S_{k_{2j+2}}$, then we have:
$j<t$ which makes the projective dimension smaller than $r$.
To reach the maximal global dimension, from each of the classes of projective modules we pick exactly one projective module. This is equivalent to the chain condition \ref{linearChain}.

\end{proof}

\begin{theorem} The number of connected linear Nakayama algebras with $n$ simple modules and global dimension $r$ is given by the odd indexed Fibonacci numbers.
\end{theorem}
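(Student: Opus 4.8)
The plan is to follow the template of the cyclic case, Theorems \ref{thmChain} and \ref{thmfiboeven}, almost verbatim. By the preceding proposition a connected linear Nakayama algebra $A$ with $n$ simple modules satisfies $\gldim A = \lambda_1(A)$ if and only if its defining relations form a chain in the sense of Definition \ref{linearChain}, and by the lemma above $\lambda_1(A)$ equals the number of relations that minimally define $A$. Since a linear Nakayama algebra is determined up to isomorphism by its Kupisch series, equivalently by its system of relations, and with no cyclic-shift ambiguity this time, it suffices to count, for each $n$, the linear chains with $n$ simple modules.

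Next I would stratify this count by the global dimension $r\geq 1$ of the algebra. For a chain one has $\gldim A = \lambda_1(A) = r$, and $A$ is defined by $\alpha_n=0$ together with $r-1$ further relations $\alpha_{k_{2i}}\cdots\alpha_{k_{2i-1}}=0$; in particular $r=1$ corresponds precisely to the hereditary algebra $KA_n$ with no further relations. Writing $L(n,r)$ for the number of linear chains with $n$ simple modules and global dimension $r$, Definition \ref{linearChain} identifies $L(n,r)$ with the number of integer tuples $1\leq k_1<k_3<\cdots<k_{2r-3}$ and $k_2<k_4<\cdots<k_{2r-2}\leq n-1$ satisfying $k_{2i-1}\leq k_{2i-2}<k_{2i+1}$. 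Counting these exactly as in the proof of Proposition \ref{fiboprop1}, by separating solutions according to how many of the inequalities $k_{2i-1}\leq k_{2i-2}$ are equalities, yields a sum of products of binomial coefficients which collapses, by Vandermonde's identity (equivalently, by $\binom{N}{a}+\binom{N}{a+1}=\binom{N+1}{a+1}$ and induction, as in Corollary \ref{fibocor1}), to $L(n,r)=\binom{n+r-3}{2r-2}$.

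It then remains to sum over $r$. Setting $b_n:=\sum_{r\geq 1}L(n,r)=\sum_{i\geq 0}\binom{(n-2)+i}{2i}$, the classical identity $\sum_{i\geq 0}\binom{m+i}{2i}=F_{2m+1}$ gives $b_n=F_{2n-3}$; alternatively one reproves this by the telescoping-recursion method of Proposition \ref{fibopro2}, checking $b_2=1$, $b_3=2$ and $b_{n+1}=3b_n-b_{n-1}$, which is precisely the recursion satisfied by the odd indexed Fibonacci numbers. Hence the number of connected linear Nakayama algebras with $n$ simple modules and $\gldim A = \lambda_1(A)$ equals $F_{2n-3}$, as asserted.

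I expect the only genuine difficulty to be, as in the cyclic case, the combinatorial bookkeeping in the middle step: correctly reading off the admissible ranges of the $k_i$, in particular the strict bound $k_{2r-2}<n$ forced by $\alpha_n=0$ (as opposed to $k_{2r}\leq n$ in the cyclic setting) and the fact that $k_1$ is now a free parameter rather than normalised to $1$, and carrying the equality-versus-strict-inequality case split through so that the binomial coefficient comes out with exactly the shift in $\binom{n+r-3}{2r-2}$. Everything after that closed form is formally identical to the arguments already given in Propositions \ref{fiboprop1} and \ref{fibopro2}.
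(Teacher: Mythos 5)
Your proposal follows the paper's argument essentially verbatim: reduce to counting chains via the preceding proposition, stratify by $r$, obtain $L(n,r)=\binom{n+r-3}{2r-2}$ by the same equality-versus-strict case split as in Proposition \ref{fiboprop1}, and sum over $r$ to get $F_{2n-3}$. The only (immaterial) difference is the last step, where the paper identifies $\sum_r L(n,r)$ with the difference $a_n-a_{n-1}$ of consecutive even-indexed Fibonacci sums from Proposition \ref{fibopro2}, whereas you invoke the identity $\sum_{i\geq 0}\binom{m+i}{2i}=F_{2m+1}$ or the recursion $b_{n+1}=3b_n-b_{n-1}$ directly; both close the same binomial sum.
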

\begin{proof}
We will follow similar arguments as in the proof \ref{thmfiboeven}. The only difference is again the existence of the simple projective module. Let $L(n,r)$ be the function which returns the nonisomorphic linear algebras with $n$ simple modules and number of relations $r$. We also include $\mathbb{A}_n$ type quivers, they corresponds to the $r=1$ case i.e $\alpha_n=0$.

We seek the integer solutions to the equations \ref{linearEquations}. 
By the same arguments as in the proof of proposition \ref{fiboprop1} and its corollary, we obtain
\begin{align}
L(n,r)=\sum\limits^{r-2}_{i=0}{n-1\choose 2r-i-2}{r-2\choose i}={n+r-3\choose 2r-2}
\end{align}

We claim that $\sum\limits^{n-1}_{r=1}L(n,r)=F_{2n-3}$. Notice that the sum:
\begin{align}
\sum\limits^{n-1}_{r=1}L(n,r)=\sum\limits^{n-1}_{r=1} {n+r-3\choose 2r-2}={n-2\choose 0}+{n-1\choose 2}+{n\choose 4}+\cdots+ {2n-4\choose 2n-4}&
\end{align}
is the difference $a_{n}-a_{n-1}$ \ref{differenceEvenFibo}. Since the difference of two consecutive even indexed Fibonacci number gives the odd indexed ones, the claim follows.
\end{proof}

\begin{example} Here we list algebras subject to the theorem above with a small number of simple modules $n$.
\begin{itemize}
\item $n=2$, $[2,1]$
\item $n=3$, $[3,2,1],[2,2,1]$
\item $n=4$, $[4,3,2,1], [3,3,2,1],[2,3,2,1],[3,2,2,1],[2,2,2,1]$
\end{itemize}
\end{example}

\section{Outlook on other classes of algebras}
In this article we mainly looked at the property of being S-connected for Nakayama algebras. But computer experiments with the GAP-package \cite{QPA} suggest that the property of being S-connected is also interesting for other classes of algebras such as blocks of Schur algebras. 
In fact all representation-finite or known tame blocks of Schur algebras are S-connected as well as some other classes.
For the definition of Schur algebras and the classification of representation-finite and tame types we refer for example to the survey article \cite{Kue}. We just illustrate here the example of the representation-finite blocks.
\begin{example}
Let $A=KQ/I$ be the algebra with the following quiver $Q$:
$$\xymatrix@1{\bullet^1 \ar@/^1pc/ [r]^{a_1} & \bullet^2 \ar@/^1pc/ [r]^{a_2} \ar@/^1pc/ [l]_{b_1}  & \bullet^3 \ar@/^1pc/ [r]^{a_3} \ar@/^1pc/ [l]^{b_2}& \bullet^4 \ar@/^1pc/ [l]^{b_3} & \cdots& \bullet^{m-1} \ar@/^1pc/ [r]^{a_{m-1}} & \bullet^{m} \ar@/^1pc/ [l]^{b_{m-1}}}$$
and with relations: $b_{m-1} a_{m-1}, \ b_{i-1}a_{i-1}-a_i b_i,\ a_{i-1}a_i, \ b_i b_{i-1}, \ i=2, \ldots, m-1$.
Any representation-finite block of a Schur algebra is Morita equivalent to such an algebra for some $m \geq 1$, see for example \cite[Theorem 3.12]{Kue}.
For a fixed number of simples $m$ we have have $pd(S_i)=m-2+i$ for $i=1,...,m$ and thus $A$ is S-connected.
We leave the elementary proof to the reader.

\end{example}
Our experiments lead to the following question:
\begin{question}
Let $A$ be a block of a Schur algebra. Is $A$ S-connected?

\end{question}
In fact, we are not aware of a quasi-hereditary connected algebra having a simple preserving duality which is not S-connected. This class of algebras includes all blocks of Schur algebras.

Finally we give an example of a quasi-hereditary algebra that is not S-connected:
\begin{example}
Let $A=KQ/I$ be the algebra with the following quiver:
$$\xymatrix@1{\bullet^0 & \bullet^1 \ar@/^1pc/ [l]^{u} \ar@/^1pc/ [r]^{a_1} & \bullet^2 \ar@/^1pc/ [r]^{a_2} \ar@/^1pc/ [l]_{b_1}  & \bullet^3  \ar@/^1pc/ [l]^{b_2}& }$$
with the relations $a_1 a_2, b_2 a_2, b_2 b_1, a_2 b_2-b_1a_1$.
Basically $A$ results from glueing a sink to the quiver algebra of a representation-finite block of a Schur algebra with 3 simple modules.
The simple $A$-modules have projective dimension 0,2,3 and 4 and thus $A$ is not S-connected while $A$ is quasi-hereditary. We leave the easy proof again to the reader.
\end{example}

\section{Acknowledgements}
Rene Marczinzik is funded by the DFG with the project number 428999796.
We are thankful to Jeremy Rickard who allowed us to use his short proof of Proposition \ref{acyclicpropo}. The first named author thanks Dag Madsen for useful conversations on Nakayama algebras.
We profited from the use of the GAP-package \cite{QPA}.
Emre Sen is thankful to Gordana Todorov and Claus Michael Ringel for their help and support.

\end{document}